\documentclass[preprint,12pt]{elsarticle}

\usepackage[T1]{fontenc}
\usepackage{lmodern}
\usepackage{amsmath,amssymb,amsthm,mathtools}
\usepackage{enumitem}
\usepackage{microtype}
\usepackage[hidelinks]{hyperref}
\hypersetup{pdftitle={Singular Submodules of Abelian Groups over Their Endomorphism Rings},pdfauthor={Jiang Yang and Xin Zhang}}

\journal{Journal of Algebra}

\newtheorem{theorem}{Theorem}[section]
\newtheorem{proposition}[theorem]{Proposition}
\newtheorem{lemma}[theorem]{Lemma}
\newtheorem{corollary}[theorem]{Corollary}
\theoremstyle{remark}
\newtheorem{remark}[theorem]{Remark}
\theoremstyle{definition}
\newtheorem{definition}[theorem]{Definition}
\newtheorem{example}[theorem]{Example}

\newcommand{\Endo}{\operatorname{End}}
\newcommand{\Homm}{\operatorname{Hom}}
\newcommand{\Extt}{\operatorname{Ext}}
\newcommand{\Jaco}{\operatorname{Jac}}
\newcommand{\Anno}{\operatorname{Ann}}
\newcommand{\imma}{\operatorname{im}}
\newcommand{\SingSubo}{\operatorname{SingSub}}
\newcommand{\FIo}{\operatorname{FI}}

\newcommand{\Z}{\mathbb Z}
\newcommand{\Q}{\mathbb Q}
\newcommand{\Fp}{\mathbb F_p}
\newcommand{\leess}{\leq_{\mathrm e}}
\newcommand{\Sub}{\operatorname{Sub}}

\begin{document}

\begin{frontmatter}

\title{Singular Submodules of Abelian Groups over Their Endomorphism Rings}

\author[aff1]{Jiang Yang\corref{cor1}}
\ead{yangjiangdy@126.com}
\author[aff1]{Xin Zhang}
\ead{15247161004@163.com}
\cortext[cor1]{Corresponding author}
\address[aff1]{School of Mathematical Sciences, Guangxi Minzu University, Nanning, China}

\begin{abstract}
Let $A$ be an abelian group and $E=\Endo_{\Z}(A)$.  We give a module-theoretic description of the singular $E$-submodules asked for in Fuchs' Problem~1.2.  For a unital ring $R$ and a left $R$-module $M$, put $W=I_R({}_RR)\oplus I_R(M)$ and $J=\Jaco(\Endo_R(W))$, and let $u$ be the image of $1_R$.  The classical essential-kernel criterion yields
\[
        Z_R(M)=M\cap Ju.
\]
For $R=E$ and $M=A$, all singular submodules are therefore the $E$-submodules of $A\cap Ju$.  Writing $T=t(A)$ and $B=A/T$, we prove a torsion-transfer formula, identify the torsion part as $\bigoplus_p pT_p$, and describe simultaneous prime lifting by a canonical obstruction.  The resulting extension gives an $\Extt/\Homm$ parametrization of all singular submodules.  We obtain explicit formulas for torsion groups and for $\Z(p^\infty)\oplus B$ with $B$ torsion-free; in the latter case the fully invariant subgroup lattice of $B$ occurs as an interval.  The general description retains the induced endomorphism action and extension data, rather than giving a classification by classical group invariants.
\end{abstract}

\begin{keyword}
abelian group \sep endomorphism ring \sep singular submodule \sep injective envelope \sep fully invariant subgroup
\MSC[2020] 20K30 \sep 16D80 \sep 16S50
\end{keyword}

\end{frontmatter}

\section{Introduction}

An abelian group $A$ is naturally a faithful left module over $E=\Endo_{\Z}(A)$, and its $E$-submodules are precisely its fully invariant subgroups.  Fuchs discusses this point of view in \cite[Chapter~16, \S6]{Fuchs2015}.  At the end of Chapter~1 he asks \cite[Problem~1.2, p.~41]{Fuchs2015}:
\begin{quote}
\emph{What are the singular submodules of $A$ over its endomorphism ring $\Endo A$?}
\end{quote}
Here singularity is taken over the fixed ring $E$: an element $a\in A$ is singular when $\Anno_E(a)$ is an essential left ideal of $E$.  The problem places no restriction on $A$ and specifies no particular form for the desired description.  It is adjacent to the problem of characterizing fully invariant subgroup lattices of torsion-free groups \cite[Problem~1.1]{Fuchs2015}.

The singular submodule and its relation to injective envelopes belong to classical module theory; see \cite{Lam1999}.  The radical of the endomorphism ring of a quasi-injective module is described by essential kernels in \cite[Lemma~7, p.~897]{Osofsky1968}.  We use its injective case and include a proof.  The representation below is a consequence of this classical criterion.

Singularity over the endomorphism ring has also been studied explicitly.  Lee defines endosingular elements and endo-nonsingular modules in \cite[Definitions~5.3.1--5.3.2, p.~139]{Lee2010}, proves that the endosingular elements form a fully invariant submodule in \cite[Proposition~5.3.5]{Lee2010}, and gives cyclic and Pr\"ufer group examples in \cite[Examples~5.3.11 and~5.3.19]{Lee2010}.  We use this terminology for the established notion.

Another relevant line of work concerns groups that are torsion over their own endomorphism rings.  Faticoni constructs such torsion-free groups in \cite{Faticoni1994}.  Hill, Hill and Ullery \cite{HillHillUllery2001} use the Lambek torsion theory.  In particular, their Theorem~3 characterizes the torsion abelian groups that are Lambek torsion over their endomorphism rings as the divisible groups.  Lambek torsion implies singularity, but the converse fails for general modules.  We prove this distinction in Proposition~\ref{prop:lambek-comparison} and compare the torsion-group conclusions after Corollary~\ref{cor:torsion-extremes}.

We first give a representation valid for every unital ring $R$ and every left $R$-module $M$.  Let
\[
U=I_R({}_RR),\qquad V=I_R(M),\qquad W=U\oplus V,
\]
let $S=\Endo_R(W)$ and $J=\Jaco(S)$, and write $u\in U$ for the image of $1_R$ under the essential embedding ${}_RR\leess U$.  We prove
\begin{equation}\label{eq:intro-universal}
        Z_R(M)=M\cap Ju.
\end{equation}
Taking $R=E$ and $M=A$ identifies the largest singular $E$-submodule as $A\cap Ju$.  Thus every singular $E$-submodule is obtained by taking an $E$-submodule of this explicitly specified ambient submodule.  This answers the stated question in the sense of a uniform module-theoretic characterization for all abelian groups.  The description uses injective envelopes and a Jacobson radical; it does not compute the answer from rank, type or other classical group invariants.

We next separate the torsion and quotient data.  For $T=t(A)$ and $B=A/T$, we prove
\begin{equation}\label{eq:intro-transfer}
Z_E(A)=P_A\cap \pi^{-1}(Z_E(B)),
\qquad
P_A=\bigcap_{p:T_p\ne0}pA,
\end{equation}
where $B$ carries the action induced by $E$; in particular, $Z_E(B)$ must not in general be replaced by $Z_{\Endo(B)}(B)$.  We describe the induced singularity on $B$ in terms of liftable left ideals, and we construct an $E$-homomorphism
\[
\delta_A:\bigcap_{p:T_p\ne0}pB\longrightarrow
\frac{\prod_{p:T_p\ne0}T_p/pT_p}{\bigoplus_{p:T_p\ne0}T_p/pT_p}
\]
whose kernel is precisely $\pi(P_A)$.  This yields a canonical short exact sequence for $Z_E(A)$ and, through its extension class, an $\Extt/\Homm$ parametrization of all singular submodules.

The quotient term $Z_E(B)$ is retained in this description.  If $A$ is torsion-free, the torsion-transfer and extension formulas reduce to identities; the representation \eqref{eq:intro-universal} still applies, but these formulas do not give a further intrinsic classification in that case.  For torsion groups we obtain the explicit formula $Z_E(A)=\bigoplus_p pA_p$.

Finally, for every torsion-free $B$ and every prime $p$, we compute the singular submodule of $\Z(p^\infty)\oplus B$ and realize $\FIo(B)$ as an interval in its singular-submodule lattice.  This is a precise connection with the neighbouring lattice problem.  The contribution of the paper is the representation together with the transfer, lifting and extension formulas and these explicit applications.

\section{Preliminaries}

All rings are associative with identity, and all modules are unital left modules unless explicitly stated otherwise.  Endomorphisms are composed from right to left.  For a left $R$-module $M$ and $m\in M$, write
\[
\Anno_R(m)=\{r\in R:rm=0\}.
\]
A submodule $N\le M$ is \emph{essential}, written $N\leess M$, if $N\cap X\ne0$ for every nonzero submodule $X\le M$.  In particular, a left ideal $I\le R$ is essential when it is essential in the left regular module ${}_RR$.

\begin{definition}
The singular submodule of a left $R$-module $M$ is
\[
Z_R(M)=\{m\in M:\Anno_R(m)\leess R\}.
\]
The module $M$ is singular if $Z_R(M)=M$, and nonsingular if $Z_R(M)=0$.
\end{definition}

We use $I_R(M)$ for an injective envelope of $M$.  Background on essential extensions, injective envelopes, Jacobson radicals and singular modules may be found in \cite{Lam1999}.  We record two elementary facts that will be used repeatedly.

\begin{lemma}\label{lem:sing-basic}
For every left $R$-module $M$, the set $Z_R(M)$ is a fully invariant $R$-submodule of $M$.
\end{lemma}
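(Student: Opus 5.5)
The plan is to reduce everything to two standard facts about essential left ideals: a finite intersection of essential left ideals is essential, and if $I\leess R$ and $r\in R$, then $(I:r)=\{s\in R:sr\in I\}$ is again essential. I would prove both directly first.

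For the intersection, let $I,I'\leess R$ and let $X\ne0$ be a left ideal. Then $X\cap I\ne0$ is a nonzero left ideal, so $(X\cap I)\cap I'\ne0$. For the colon ideal, let $X\ne0$ be a left ideal. If $Xr=0$, then $X\subseteq(I:r)$ and we are done. Otherwise $Xr$ is a nonzero left ideal, so there is $x\in X$ with $0\ne xr\in I$. Then $x\in X\cap(I:r)$ and $x\ne0$.

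Next I would show that $Z_R(M)$ is a submodule.
\begin{itemize}
\item It contains $0$, since $\Anno_R(0)=R$.
\item For $m,m'\in Z_R(M)$ we have $\Anno_R(m)\cap\Anno_R(m')\subseteq\Anno_R(m-m')$. The left side is essential, and any left ideal containing an essential left ideal is essential.
\item For $r\in R$ we have $\Anno_R(rm)=(\Anno_R(m):r)$, which is essential by the colon-ideal fact.
\end{itemize}

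Full invariance is the easy step. Take $f\in\Endo_R(M)$ and $m\in Z_R(M)$. Since $f$ is $R$-linear, $\Anno_R(m)\subseteq\Anno_R(f(m))$, so $\Anno_R(f(m))$ contains an essential left ideal and is essential. Hence $f(m)\in Z_R(M)$.

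The only step needing real care is the colon-ideal lemma. The point to handle is the case $Xr=0$, which is exactly why I split on whether $Xr$ vanishes.
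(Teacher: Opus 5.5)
Your proposal is correct and follows essentially the same route as the paper. Additive closure comes from the essential left ideal $\Anno_R(m)\cap\Anno_R(m')\subseteq\Anno_R(m-m')$. Scalar closure comes from the case split on whether $Xr=0$; you package it as essentiality of $(\Anno_R(m):r)=\Anno_R(rm)$, where the paper argues inline. Full invariance comes from $\Anno_R(m)\subseteq\Anno_R(f(m))$.
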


\begin{proof}
If $x,y\in Z_R(M)$, then $\Anno_R(x)\cap\Anno_R(y)$ is an essential left ideal contained in $\Anno_R(x+y)$, so $x+y$ is singular.  Let $r\in R$ and let $0\ne L\le R$ be a left ideal.  If $Lr=0$, then $L\subseteq\Anno_R(rx)$; otherwise the nonzero left ideal $Lr$ meets $\Anno_R(x)$, so there is $l\in L$ with $0\ne lr\in\Anno_R(x)$, and hence $l(rx)=0$.  Thus $rx$ is singular.  Finally, if $\alpha:M\to M$ is $R$-linear, then $\Anno_R(x)\subseteq\Anno_R(\alpha x)$, and therefore $\alpha(Z_R(M))\subseteq Z_R(M)$.
\end{proof}

\begin{lemma}\label{lem:submodule-sing}
If $N\le_R M$, then
\[
Z_R(N)=N\cap Z_R(M).
\]
Consequently, $N$ is singular if and only if $N\le Z_R(M)$.
\end{lemma}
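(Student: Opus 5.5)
The equality is immediate from the definition once one observes that the annihilator of an element is computed in $R$ alone and does not depend on the ambient module. I would simply unwind the definition of $Z_R$ on both sides.

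First, for $x\in N$ we have
\[
\Anno_R(x)=\{r\in R: rx=0\}.
\]
This set is the same whether $x$ is regarded as an element of $N$ or of $M$, because the action of $R$ on $N$ is the restriction of its action on $M$. Essentiality of this left ideal is a property inside ${}_RR$, so it too is independent of the ambient module. Hence, for $x\in N$, we get $x\in Z_R(N)$ if and only if $\Anno_R(x)\leess R$, if and only if $x\in Z_R(M)$. Taking the two inclusions separately gives $Z_R(N)\subseteq N\cap Z_R(M)$ and $N\cap Z_R(M)\subseteq Z_R(N)$, so $Z_R(N)=N\cap Z_R(M)$.

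For the consequence, $N$ is singular exactly when $Z_R(N)=N$. By the formula just proved, this means $N\cap Z_R(M)=N$, which is equivalent to $N\le Z_R(M)$.

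There is no real obstacle here. The only point to state explicitly is that essentiality is measured in ${}_RR$ and not in $M$ or $N$, which is why no relation between $N$ and $M$ beyond $N\le_R M$ is needed.
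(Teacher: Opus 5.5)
Your proof is correct and follows the paper's argument: both rest on the observation that $\Anno_R(x)$ is computed in $R$, and its essentiality is measured in ${}_RR$, independently of whether $x$ is viewed in $N$ or in $M$. The equality and the singularity criterion then follow immediately.
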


\begin{proof}
For $x\in N$, its annihilator is computed in the ring $R$ and is independent of whether $x$ is regarded as an element of $N$ or of $M$.  The displayed equality and the final assertion follow immediately.
\end{proof}

We use the Jacobson-radical criterion that $a\in\Jaco(S)$ if and only if $1-sa$ is invertible for every $s\in S$; see \cite{Lam1999}.  The next lemma is the injective case of the classical quasi-injective criterion in \cite[Lemma~7]{Osofsky1968}.  The right-module formulation there gives the left-module statement by replacing the base ring by its opposite.  We include the proof needed here.

\begin{lemma}[Classical radical criterion]\label{lem:rad-injective}
Let $W$ be an injective left $R$-module and $S=\Endo_R(W)$.  Then
\[
\Jaco(S)=\{f\in S:\ker f\leess W\}.
\]
\end{lemma}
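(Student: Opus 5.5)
We prove the two inclusions separately. Throughout, put $\Delta=\{f\in S:\ker f\leess W\}$. First, $\Delta$ is a two-sided ideal of $S$. Indeed, $\ker(f+g)\supseteq\ker f\cap\ker g$, and a finite intersection of essential submodules is essential. Also $\ker(gf)\supseteq\ker f$. For $fg$, note that $\ker(fg)=g^{-1}(\ker f)$, and the preimage of an essential submodule under any homomorphism is essential. To see the last point, let $0\ne X\le W$. If $g(X)=0$, then $X\subseteq g^{-1}(\ker f)$. Otherwise $g(X)\cap\ker f\ne0$, so some $x\in X$ satisfies $0\ne g(x)\in\ker f$, and this $x$ is a nonzero element of $X\cap g^{-1}(\ker f)$.

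\emph{Inclusion $\Delta\subseteq\Jaco(S)$.} Since $\Delta$ is an ideal, it suffices to show that $1-f$ is invertible for every $f\in\Delta$; the criterion then applies with $sf\in\Delta$ in place of $f$. We have $\ker f\cap\ker(1-f)=0$, because $x=f(x)=0$ there. As $\ker f$ is essential, $\ker(1-f)=0$, so $1-f$ is injective. Its image $(1-f)(W)\cong W$ is injective, hence a direct summand: $W=(1-f)(W)\oplus C$. Now $1-f$ restricts to the identity on $\ker f$, so $\ker f\subseteq(1-f)(W)$. Then $C\cap\ker f=0$, and essentiality forces $C=0$. Thus $1-f$ is bijective, and therefore invertible in $S$.

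\emph{Inclusion $\Jaco(S)\subseteq\Delta$.} This is the step needing injectivity in an essential way, and it is the main obstacle. Let $f\in\Jaco(S)$ and choose a complement $K$ of $\ker f$, so that $\ker f\oplus K\leess W$ and $K\cap\ker f=0$. Then $f|_K:K\to f(K)$ is an isomorphism. Its inverse $f(K)\to K\subseteq W$ extends, by injectivity of $W$, to some $g\in S$, and $gf$ is the identity on $K$. Hence $(1-gf)(K)=0$. Since $gf\in\Jaco(S)$, the map $1-gf$ is invertible, which forces $K=0$. Therefore $\ker f$ has zero complement, i.e.\ $\ker f\leess W$.

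The only delicate points are the ideal property (the preimage argument) and the extension step. The extension step works because $f(K)$ is a submodule of the injective module $W$, so the partial inverse always extends.
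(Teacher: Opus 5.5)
Your proof is correct and follows the paper's argument. For the forward inclusion, essentiality of $\ker f$ makes $1-gf$ injective, and splitting together with $\ker f\subseteq\imma(1-gf)$ makes it surjective; for the converse, you extend the inverse of $f$ on a submodule meeting $\ker f$ trivially. The differences are cosmetic: you first reduce to $1-f$ using that $\Delta$ is a left ideal (the right-ideal part is never used), and you take a complement $K$ where any $X$ with $X\cap\ker f=0$ would do.
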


\begin{proof}
Suppose first that $\ker f\leess W$, and let $g\in S$.  Put $h=1-gf$.  If $0\ne x\in\ker h$, then $x=gf(x)$.  Since $\ker f$ is essential, there is $r\in R$ with $0\ne rx\in\ker f$.  But
\[
rx=rgf(x)=g(rf(x))=0,
\]
a contradiction.  Thus $h$ is injective.  Because $W$ is injective, the monomorphism $h:W\to W$ splits.  Moreover $h$ restricts to the identity on $\ker f$, so $h(W)$ contains the essential submodule $\ker f$.  Hence the direct summand $h(W)$ is essential in $W$, and therefore $h(W)=W$.  Thus $1-gf$ is an automorphism for every $g\in S$, whence $f\in\Jaco(S)$.

Conversely, suppose that $\ker f$ is not essential.  Choose $0\ne X\le W$ with $X\cap\ker f=0$.  Then $f|_X:X\to f(X)$ is an isomorphism.  Since $W$ is injective, the inverse $f(X)\to X\hookrightarrow W$ extends to some $g\in S$.  Hence $(1-gf)X=0$, so $1-gf$ is not invertible.  Therefore $f\notin\Jaco(S)$.
\end{proof}

For an abelian group $A$, we write $E=\Endo_{\Z}(A)$ and regard $A$ as a left $E$-module by evaluation.  A subgroup $H\le A$ is an $E$-submodule if and only if it is fully invariant.  We denote by $\FIo(A)$ the lattice of fully invariant subgroups of $A$.

\subsection*{Comparison with Lambek torsion}

A left $R$-module $M$ is Lambek torsion if $\Homm_R(M,I_R({}_RR))=0$.  We shall use the following equivalent criterion from \cite[Proposition~1, p.~256]{HillHillUllery2001}: for every $x\in M$ and $0\ne c\in R$, there is $r\in R$ such that
\begin{equation}\label{eq:lambek-test}
rx=0,\qquad rc\ne0.
\end{equation}
This should be distinguished from the criterion for a fixed $x$ to be singular, which requires $(rc)x=0$ and $rc\ne0$ for each $0\ne c\in R$.

\begin{proposition}\label{prop:lambek-comparison}
Every Lambek torsion module is singular.  The converse need not hold.  If $R$ is a commutative integral domain, then the Lambek torsion modules, the singular modules and the ordinary torsion modules coincide.
\end{proposition}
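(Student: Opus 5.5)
The plan is to treat the three assertions separately, using the criterion \eqref{eq:lambek-test} for Lambek torsion and the definition of $Z_R(M)$.

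\emph{Lambek torsion implies singular.} Let $M$ be Lambek torsion, $x\in M$, and $0\ne L\le R$ a left ideal; we must show $L\cap\Anno_R(x)\ne0$. Pick $0\ne c\in L$. Since $cx\in M$, applying \eqref{eq:lambek-test} to the element $cx$ and the nonzero $c$ gives $r\in R$ with $r(cx)=0$ and $rc\ne0$. Then $rc$ is a nonzero element of $L$ with $(rc)x=0$, so $\Anno_R(x)$ meets $L$ nontrivially. Hence $\Anno_R(x)\leess R$ for every $x$, and $M$ is singular. The only point to get right is to apply the test to $cx$ rather than to $x$.

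\emph{The converse fails.} I would look for a ring with a nonzero singular module mapping nontrivially into $I_R(R)$; a concrete choice is $R=\Z/4\Z$. Here $I_R(R)=R$, since $\Z/4$ is self-injective. The ideal $2R$ is the unique minimal ideal, so it is essential, and the module $M=R/2R\cong\Z/2$ has annihilator $2R$ at each element. Thus $M$ is singular. However, $\Homm_R(M,R)\ne0$ via $1+2R\mapsto 2$, so $M$ is not Lambek torsion. Equivalently, \eqref{eq:lambek-test} fails for $x=\bar1$ and $c=2$: $rx=0$ forces $r\in2R$, and then $rc=0$. This is the step most likely to need care, namely confirming self-injectivity (Baer's criterion on the ideals $0,2R,R$ suffices); it can be avoided entirely by using the direct test \eqref{eq:lambek-test}, as above.

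\emph{Commutative domains.} If $R$ is a domain, every nonzero ideal is essential, since $ab\ne0$ lies in $aR\cap bR$. So $x$ is singular iff $\Anno_R(x)\ne0$, i.e. iff $x$ is torsion; thus singular equals torsion. By the first part, Lambek torsion implies singular, hence torsion. Conversely, let $M$ be torsion, $x\in M$, and $c\ne0$. Choose $0\ne d$ with $dx=0$ and set $r=d$. Then $rx=0$ and $rc=dc\ne0$ because $R$ is a domain, so \eqref{eq:lambek-test} holds. This closes the cycle of implications.
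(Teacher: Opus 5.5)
Your proof is correct and matches the paper's argument essentially step for step: you apply \eqref{eq:lambek-test} to $cx$, you use the same domain argument, and your counterexample $M=R/2R$ over $R=\Z/4\Z$ is the paper's $R=\Z/p^2\Z$, $M=pR$ with $p=2$, since $R/2R\cong 2R$ as $R$-modules. As you note, self-injectivity of $\Z/4\Z$ is not needed, because the composite $M\to R\hookrightarrow I_R(R)$ is already nonzero.
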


\begin{proof}
Suppose $M$ is Lambek torsion.  Given $x\in M$ and $0\ne c\in R$, apply \eqref{eq:lambek-test} to $cx$.  There is $r$ such that $rc\ne0$ and $(rc)x=0$.  Hence $\Anno_R(x)$ meets every nonzero principal left ideal $Rc$, and is essential.

For failure of the converse, take $R=\Z/p^2\Z$ and $M=pR$.  Every nonzero element of $M$ has annihilator $pR$, an essential ideal of $R$, so $M$ is singular.  However, the inclusion $M\hookrightarrow R\hookrightarrow I_R(R)$ is nonzero, so $M$ is not Lambek torsion.

Finally, in a commutative integral domain every nonzero ideal is essential: for nonzero elements $a$ and $b$ in two nonzero ideals, $0\ne ab$ belongs to their intersection.  Thus singularity of $x$ is equivalent to a nonzero annihilator.  If every $x\in M$ has a nonzero annihilator, then a choice $0\ne r$ with $rx=0$ also satisfies $rc\ne0$ for every $0\ne c$, proving \eqref{eq:lambek-test}.
\end{proof}

\section{A universal representation of the singular submodule}

We derive the representation from Lemma~\ref{lem:rad-injective}.  The construction includes the injective envelope of the regular module as a direct summand; this is needed for the evaluation at the image of $1_R$.

\begin{lemma}\label{lem:JW-singular}
Let $R$ be a unital ring, $M$ a left $R$-module, and put
\[
U=I_R({}_RR),\quad V=I_R(M),\quad W=U\oplus V,
\quad S=\Endo_R(W),\quad J=\Jaco(S).
\]
Then $JW\subseteq Z_R(W)$.
\end{lemma}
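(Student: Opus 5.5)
The plan is to combine the essential-kernel description of $J$ from Lemma~\ref{lem:rad-injective} with the standard fact that a "colon ideal" of an essential submodule is an essential left ideal. Since $W=U\oplus V$ is a finite direct sum of injective modules, it is injective. Lemma~\ref{lem:rad-injective} therefore applies to $S=\Endo_R(W)$: every $f\in J$ has $\ker f\leess W$.

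Next, I would show that each generator $f(w)$ of $JW$, with $f\in J$ and $w\in W$, lies in $Z_R(W)$. Put $K=\ker f$ and
\[
(K:w)=\{r\in R: rw\in K\}.
\]
Since $f$ is $R$-linear, $r\in(K:w)$ gives $r f(w)=f(rw)=0$, so $(K:w)\subseteq\Anno_R(f(w))$. It therefore suffices to prove that $(K:w)\leess R$.

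The key step is this colon-ideal claim. Let $0\ne L\le R$ be a left ideal.
\begin{itemize}
\item If $Lw=0$, then $L\subseteq(K:w)$, and $L\cap(K:w)=L\ne0$.
\item Otherwise $Lw$ is a nonzero submodule of $W$. Because $K\leess W$, we have $Lw\cap K\ne0$, so there is $l\in L$ with $0\ne lw\in K$. Then $l\ne0$ and $l\in L\cap(K:w)$.
\end{itemize}
In both cases $L$ meets $(K:w)$ nontrivially, so $(K:w)$ is essential. Hence $\Anno_R(f(w))$, which contains it, is essential too, and $f(w)\in Z_R(W)$.

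Finally, $JW$ consists of finite sums $\sum f_i(w_i)$ with $f_i\in J$ and $w_i\in W$. Since $Z_R(W)$ is a submodule by Lemma~\ref{lem:sing-basic}, it is closed under such sums, which gives $JW\subseteq Z_R(W)$. Each step is elementary once Lemma~\ref{lem:rad-injective} is available; the only substantive point is the colon-ideal essentiality argument above.
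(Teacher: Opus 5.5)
Your proof is correct and matches the paper's argument: injectivity of $W$ together with Lemma~\ref{lem:rad-injective} gives $\ker f\leess W$, and the same case split on whether $Lw=0$ shows that $\Anno_R(f(w))$ is essential. Your colon ideal $(K:w)$ is only a harmless intermediate step, and citing Lemma~\ref{lem:sing-basic} for closure under sums spells out the paper's final summation step.
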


\begin{proof}
The module $W$ is injective, being a finite direct sum of injective modules.
By Lemma~\ref{lem:rad-injective}, every $f\in J$ has essential kernel.  Let $w\in W$ and set $y=f(w)$.  To prove that $y$ is singular, let $0\ne L\le R$ be a left ideal.  If $Lw=0$, then $L\subseteq\Anno_R(y)$.  Otherwise $Lw$ is a nonzero submodule of $W$, hence
\[
0\ne Lw\cap\ker f.
\]
Choose $r\in L$ with $0\ne rw\in\ker f$.  Then
\[
ry=rf(w)=f(rw)=0,
\]
so $0\ne r\in L\cap\Anno_R(y)$.  Thus $\Anno_R(y)$ meets every nonzero left ideal of $R$, and hence is essential.  Therefore $f(w)\in Z_R(W)$.  Summing over $f\in J$ and $w\in W$ gives $JW\subseteq Z_R(W)$.
\end{proof}

Here $JW$ denotes the sum of the images $f(W)$ for $f\in J$.  Fix the essential embedding ${}_RR\leess U$, and write $u\in U$ for the image of $1_R$.  Thus $r\mapsto ru$ identifies ${}_RR$ with $Ru$, and $Ju=\{f(u):f\in J\}$ is an additive subgroup of $W$.

\begin{lemma}\label{lem:singular-Ju}
With the notation above,
\[
Z_R(W)\subseteq Ju.
\]
\end{lemma}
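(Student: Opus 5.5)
Given $y\in Z_R(W)$, we will construct $f\in J$ with $f(u)=y$. The natural candidate comes from the cyclic submodule $Ru\cong{}_RR$. Put $I=\Anno_R(y)$, which is an essential left ideal of $R$ because $y$ is singular. Define
\[
\varphi:Ru\to W,\qquad \varphi(ru)=ry .
\]
This is well defined because $r\mapsto ru$ is injective: the embedding ${}_RR\leess U$ sends $r$ to $ru$, so $ru=0$ forces $r=0$. It is clearly $R$-linear.

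Next we extend $\varphi$ to all of $W$. Since $W$ is injective, $\varphi$ extends along the inclusion $Ru\le W$ to some $R$-homomorphism $f:W\to W$, and then $f(u)=\varphi(u)=y$. It remains to arrange that $f\in J$. By Lemma~\ref{lem:rad-injective} this is equivalent to $\ker f\leess W$. We do not control the extension on $V$, so we first modify it there, defining
\[
f=\tilde\varphi\circ p_U ,
\]
where $p_U:W\to U$ is the projection along $V$ and $\tilde\varphi:U\to W$ extends $\varphi$. Such a $\tilde\varphi$ exists by injectivity of $W$ applied to $Ru\le U$. Since $u\in U$, we still have $f(u)=y$.

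Now we check that $\ker f$ is essential. It contains $V\oplus\ker\tilde\varphi$, so it suffices to show $\ker\tilde\varphi\leess U$: a direct sum of essential submodules is essential in the direct sum, and $V\le V$ trivially. For this, note that $\ker\tilde\varphi\supseteq\ker\varphi=Iu$. Since $I\leess{}_RR$, the isomorphism ${}_RR\cong Ru$ gives $Iu\leess Ru$. Since $Ru\leess U$, transitivity of essentiality yields $Iu\leess U$, and hence $\ker\tilde\varphi\leess U$. Therefore $f\in J$ and $y=f(u)\in Ju$.

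The main obstacle is ensuring the extension has essential kernel on all of $W$, not just on $U$. A naive extension of $\varphi$ to $W$ could have non-essential kernel on the $V$-component. Composing with the projection $p_U$ resolves this, and this step is exactly why the summand $U$ containing $u$ is needed in $W$.
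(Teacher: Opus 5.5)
Your proof is correct and is essentially the paper's argument. You define $ru\mapsto ry$ on $Ru$ and extend it to $U$ using injectivity of $W$. Your $\tilde\varphi\circ p_U$ is exactly the paper's $F$ with $F|_U=\widetilde\varphi_x$ and $F|_V=0$, and membership in $J$ follows from Lemma~\ref{lem:rad-injective} because $\ker F$ contains the essential submodule $\Anno_R(y)u\oplus V$ of $W$.
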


\begin{proof}
Let $x\in Z_R(W)$.  Define an $R$-homomorphism
\[
\varphi_x:Ru\longrightarrow W,
\qquad ru\longmapsto rx.
\]
Its kernel is $\Anno_R(x)u$, which is essential in $Ru$.  Since $Ru\leess U$, transitivity of essentiality shows that $\Anno_R(x)u\leess U$.  Because $W$ is injective, $\varphi_x$ extends to an $R$-homomorphism $\widetilde\varphi_x:U\to W$.

Define $F\in\Endo_R(W)$ by
\[
F|_U=\widetilde\varphi_x,
\qquad
F|_V=0.
\]
The kernel of $F|_U$ contains the essential submodule $\Anno_R(x)u$ of $U$, hence $\ker(F|_U)\leess U$.  It follows that
\[
\ker F=\ker(F|_U)\oplus V\leess U\oplus V=W.
\]
By Lemma~\ref{lem:rad-injective}, $F\in J$.  Finally,
\[
F(u)=\varphi_x(u)=x,
\]
so $x\in Ju$.
\end{proof}

\begin{theorem}[Universal representation]\label{thm:universal}
Let $R$ be a unital ring and $M$ a left $R$-module.  Choose injective envelopes
\[
U=I_R({}_RR),\qquad V=I_R(M),
\]
put
\[
W=U\oplus V,
\qquad S=\Endo_R(W),
\qquad J=\Jaco(S),
\]
and let $u\in U$ be the image of $1_R$ in the essential embedding ${}_RR\leess U$.  Then
\begin{equation}\label{eq:universal}
        Z_R(M)=M\cap Ju,
\end{equation}
where $M$ is identified with its image in $V\le W$.
\end{theorem}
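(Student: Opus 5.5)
The plan is to combine the two inclusions already established, Lemma~\ref{lem:JW-singular} and Lemma~\ref{lem:singular-Ju}, with the submodule compatibility of Lemma~\ref{lem:submodule-sing}. Throughout, $M$ is identified with its image in $V$, and hence with a submodule of $W=U\oplus V$ via the second coordinate. Both $M\le W$ and $Ju\subseteq W$ then live in the same ambient module, so the intersection in \eqref{eq:universal} makes sense.

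The first step is to establish the chain of equalities
\[
Z_R(W)\subseteq Ju\subseteq JW\subseteq Z_R(W).
\]
The first inclusion is Lemma~\ref{lem:singular-Ju}. The second is trivial, since $f(u)\in f(W)$ for every $f\in J$. The third is Lemma~\ref{lem:JW-singular}. Hence $Ju=JW=Z_R(W)$; in particular $Ju$ is an $R$-submodule of $W$, not merely an additive subgroup.

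The second step is to apply Lemma~\ref{lem:submodule-sing} to the submodule $M\le W$. This gives
\[
Z_R(M)=M\cap Z_R(W)=M\cap Ju,
\]
which is exactly \eqref{eq:universal}. The only subtlety is that $Z_R(M)$ does not depend on the ambient module: annihilators are computed in $R$, so regarding $M$ inside $V$ or inside $W$ changes nothing.

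There is essentially no obstacle, since all the work was done in Lemmas~\ref{lem:rad-injective}, \ref{lem:JW-singular} and~\ref{lem:singular-Ju}. The one point I would check is that the extension step in Lemma~\ref{lem:singular-Ju} works for elements of $W$ lying in $V$. This requires the injectivity of the target $W$, which holds, and the presence of the summand $U$, which is why $U$ is included in $W$.
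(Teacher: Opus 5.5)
Your proposal is correct and follows the paper's proof exactly: the chain $Z_R(W)\subseteq Ju\subseteq JW\subseteq Z_R(W)$, obtained from Lemmas~\ref{lem:singular-Ju} and~\ref{lem:JW-singular}, gives $Z_R(W)=Ju$, and Lemma~\ref{lem:submodule-sing} applied to $M\le W$ finishes the argument. The only difference is that you state explicitly the trivial inclusion $Ju\subseteq JW$, which the paper leaves implicit.
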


\begin{proof}
Lemmas~\ref{lem:JW-singular} and \ref{lem:singular-Ju} give
\[
Z_R(W)=JW=Ju.
\]
By Lemma~\ref{lem:submodule-sing}, applied to the embedded submodule $M\le W$,
\[
Z_R(M)=M\cap Z_R(W)=M\cap Ju.
\]
\end{proof}

\begin{corollary}[Module-theoretic description for Problem~1.2]\label{cor:fuchs}
Let $A$ be any abelian group and $E=\Endo_{\Z}(A)$.  Let
\[
U=I_E({}_EE),\qquad V=I_E(A),\qquad W=U\oplus V,
\]
let $S=\Endo_E(W)$, $J=\Jaco(S)$, and let $u\in U$ be the image of $1_E$.  Then
\begin{equation}\label{eq:Fuchs-solution}
Z_E(A)=A\cap Ju.
\end{equation}
Moreover,
\begin{equation}\label{eq:sing-lattice}
\SingSubo_E(A)=\Sub_E(A\cap Ju).
\end{equation}
Equivalently, the singular submodules are precisely the fully invariant subgroups of $A$ that are contained in $A\cap Ju$.
\end{corollary}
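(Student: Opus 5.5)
The first identity \eqref{eq:Fuchs-solution} is Theorem~\ref{thm:universal} specialized to $R=E$ and $M=A$, with $A$ regarded as a left $E$-module by evaluation. That theorem places no hypotheses on $R$ or $M$, so it applies verbatim. The only bookkeeping is to identify $A$ with its image in $V=I_E(A)\le W$, as in the statement of the theorem.

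For \eqref{eq:sing-lattice}, I first read $\SingSubo_E(A)$ as the set of singular $E$-submodules of $A$. Let $N\le_E A$. By Lemma~\ref{lem:submodule-sing}, $N$ is singular if and only if $N\le Z_E(A)$. By \eqref{eq:Fuchs-solution} this is equivalent to $N\le A\cap Ju$.

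Conversely, I need every $E$-submodule of $A\cap Ju$ to be an $E$-submodule of $A$ lying in $A\cap Ju$. This requires $A\cap Ju$ to be an $E$-submodule of $A$ in the first place. That holds because $A\cap Ju=Z_E(A)$, which is a fully invariant $E$-submodule by Lemma~\ref{lem:sing-basic}. Alternatively, $Ju=JW=Z_R(W)$ by the proof of the theorem, so $Ju$ is an $E$-submodule of $W$. The two inclusions then give $\SingSubo_E(A)=\Sub_E(A\cap Ju)$.

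The final sentence of the corollary translates $E$-submodules of $A$ into fully invariant subgroups. This is the observation recorded in the preliminaries that a subgroup $H\le A$ is an $E$-submodule exactly when $\alpha(H)\subseteq H$ for all $\alpha\in E$.

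I expect no genuine obstacle here. The only point needing care is that $Ju$, defined as the set $\{f(u):f\in J\}$, is an $E$-submodule and not merely an additive subgroup. This follows from the equality $Z_R(W)=Ju$ established in the proof of Theorem~\ref{thm:universal}, so the intersection $A\cap Ju$ is a legitimate ambient submodule.
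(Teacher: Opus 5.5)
Your proposal is correct and matches the paper's proof. Like the paper, you specialize Theorem~\ref{thm:universal} to $R=E$, $M=A$, then apply Lemma~\ref{lem:submodule-sing} and identify $E$-submodules of $A$ with fully invariant subgroups. You also check explicitly that $A\cap Ju=Z_E(A)$ is an $E$-submodule, so that $\Sub_E(A\cap Ju)$ makes sense; the paper leaves this point implicit.
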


\begin{proof}
Equation~\eqref{eq:Fuchs-solution} is Theorem~\ref{thm:universal} with $R=E$ and $M=A$.  Equation~\eqref{eq:sing-lattice} follows from Lemma~\ref{lem:submodule-sing}, and $E$-submodules of $A$ are exactly fully invariant subgroups.
\end{proof}

\begin{remark}
Corollary~\ref{cor:fuchs} covers every abelian group and every singular $E$-submodule, with no finiteness or splitting hypothesis.  It gives an exact characterization in terms of injective envelopes and a Jacobson radical.  These objects are formed from $E$ and $A$ without first knowing $Z_E(A)$.  Different choices of injective envelopes yield the same submodule after intersection with the embedded copy of $A$, by \eqref{eq:Fuchs-solution}.  This scope should be distinguished from a computation of the singular-submodule lattice in terms of classical invariants of $A$.
\end{remark}

\section{Torsion transfer and relative singularity on the quotient}

Let $A$ now be an arbitrary abelian group.  Throughout this section set
\[
E=\Endo(A),\qquad T=t(A),\qquad B=A/T,
\]
and let $\pi:A\to B$ be the canonical map.  Since $T$ is fully invariant, every element of $E$ induces an endomorphism of $B$, so $B$ is a left $E$-module.  This induced action is essential to the statements below.

Let
\[
\mathcal P_A=\{p\text{ prime}:T_p\ne0\},
\qquad
P_A=\bigcap_{p\in\mathcal P_A}pA,
\]
where the empty intersection is $A$, and put
\[
T^\circ=\bigoplus_p pT_p.
\]

We shall repeatedly use the following elementary reformulation of essentiality.

\begin{lemma}\label{lem:principal-test}
For $x\in A$, the following are equivalent:
\begin{enumerate}[label=\textup{(\roman*)}]
\item $x\in Z_E(A)$;
\item for every $0\ne f\in E$ there exists $g\in E$ such that $0\ne gf$ and $(gf)(x)=0$.
\end{enumerate}
\end{lemma}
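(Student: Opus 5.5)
The equivalence is just the definition of $Z_E(A)$ with essentiality tested on principal left ideals. By definition, $x\in Z_E(A)$ means that $\Anno_E(x)\leess E$. A left ideal is essential exactly when it meets every nonzero principal left ideal $Ef$, since every nonzero left ideal contains one. So the plan is to translate the condition ``$\Anno_E(x)\cap Ef\ne0$ for every $0\ne f\in E$'' into the form (ii).

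For (i)$\Rightarrow$(ii), fix $0\ne f\in E$. The principal left ideal $Ef$ is nonzero, because $f=1_A f\in Ef$. Essentiality of $\Anno_E(x)$ then gives a nonzero element of $Ef\cap\Anno_E(x)$. Write it as $gf$ with $g\in E$. Then $gf\ne0$, and $(gf)(x)=0$ because the module action of $E$ on $A$ is evaluation.

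For (ii)$\Rightarrow$(i), let $L$ be a nonzero left ideal of $E$ and pick $0\ne f\in L$. Condition (ii) gives $g$ with $0\ne gf\in\Anno_E(x)$. Since $L$ is a left ideal, $gf\in L$, so $L\cap\Anno_E(x)\ne0$. Hence $\Anno_E(x)\leess E$, which is (i).

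There is no genuine obstacle here. The only points to check are that the ring is unital, so that $f\in Ef$, and that the action of $E$ on $A$ is evaluation, so that $(gf)x$ and $(gf)(x)$ agree. The same argument already appears in the proof of Proposition~\ref{prop:lambek-comparison}.
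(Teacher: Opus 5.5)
Your proposal is correct and follows the paper's proof, which consists of the single observation that $\Anno_E(x)$ is essential if and only if it meets every nonzero principal left ideal $Ef$ nontrivially. You have simply written out both directions of that observation explicitly, including the use of $1_A\in E$ to get $f\in Ef$.
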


\begin{proof}
The left ideal $\Anno_E(x)$ is essential if and only if it meets every nonzero principal left ideal $Ef$ nontrivially.
\end{proof}

\begin{theorem}[Torsion transfer]\label{thm:transfer}
With the notation above,
\begin{equation}\label{eq:transfer}
Z_E(A)=P_A\cap\pi^{-1}\bigl(Z_E(B)\bigr).
\end{equation}
Consequently,
\begin{equation}\label{eq:torsion-core}
Z_E(A)\cap T=T^\circ=\bigoplus_p pT_p.
\end{equation}
\end{theorem}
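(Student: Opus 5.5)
The plan is to prove the two inclusions of \eqref{eq:transfer} separately, each time using the principal test of Lemma~\ref{lem:principal-test}. Then \eqref{eq:torsion-core} will follow from a purity computation.

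\emph{Inclusion $\subseteq$.} Let $x\in Z_E(A)$.
\begin{enumerate}[label=\textup{(\alph*)}]
\item Since $\pi$ is $E$-linear, $\Anno_E(x)\subseteq\Anno_E(\pi x)$. A left ideal containing an essential left ideal is essential, so $\pi x\in Z_E(B)$.
\item To show $x\in pA$ for each $p\in\mathcal P_A$, argue by contradiction. Suppose $x\notin pA$ and fix $0\ne c\in T_p$ with $pc=0$.
\item Since $A/pA$ is an $\Fp$-vector space in which $x+pA\ne0$, there is a homomorphism $A\to\Z/p\Z$ that does not vanish at $x$.
\item Composing it with $\Z/p\Z\cong\langle c\rangle\hookrightarrow A$ gives $0\ne f\in E$ with $f(x)=c$ and $f(A)=\langle c\rangle$.
\item For any $g\in E$, the image $gf(A)=\langle g(c)\rangle$ is generated by $g(c)=gf(x)$. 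Hence $gf\ne0$ forces $gf(x)\ne0$, contradicting Lemma~\ref{lem:principal-test}.
\end{enumerate}

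\emph{Inclusion $\supseteq$.} Let $x\in P_A$ with $\pi x\in Z_E(B)$, and let $0\ne f\in E$.
\begin{enumerate}[label=\textup{(\alph*)}]
\item Essentiality of $\Anno_E(\pi x)$, applied to $Ef$, gives $g$ with $k:=gf\ne0$ and $t:=k(x)\in T$. Let $n$ be the order of $t$.
\item If $nk\ne0$, then $nk\in Ef$ is nonzero and kills $x$, and we are done.
\item Otherwise $k(A)\subseteq T[n]=\bigoplus_{p\mid n}T_p[p^{e_p}]$, with $n=\prod_{p\mid n}p^{e_p}$, and some primary component of $k(A)$ is nonzero, say the $p$-component.
\item Multiplying $k$ by $n/p^{e_p}$ kills the other components and leaves the $p$-component unchanged. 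So we may replace $k$ by a nonzero element of $Ef$ with $k(A)\subseteq T_p[p^{e}]$; in particular $T_p\ne0$, so $p\in\mathcal P_A$.
\item Let $j\ge0$ be maximal with $p^jk\ne0$. Since $p\in\mathcal P_A$ and $x\in P_A$, write $x=py$. Then
\[
p^jk(x)=p^{j+1}k(y)=0,
\]
so $p^jk$ is a nonzero element of $Ef$ annihilating $x$.
\end{enumerate}
By Lemma~\ref{lem:principal-test}, $x\in Z_E(A)$. The main obstacle is this reduction step: a nonzero $k$ with torsion image might be killed by naive multipliers. The primary projection by an integer, followed by the maximal power of $p$, is what exploits $x\in pA$.

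\emph{Proof of \eqref{eq:torsion-core}.} Since $T\subseteq\pi^{-1}(Z_E(B))$, we get $Z_E(A)\cap T=P_A\cap T$.
\begin{enumerate}[label=\textup{(\alph*)}]
\item Each $pA\cap T$ is a fully invariant subgroup of $T$, hence the direct sum of its primary parts. Therefore $P_A\cap T=\bigoplus_q\bigl(T_q\cap P_A\bigr)$.
\item For $p\ne q$ we have $T_q=pT_q\subseteq pA$.
\item $T_q\cap qA=qT_q$ by purity of $T$: if $qy\in T_q$ then $y\in T$, and so $qy\in qT_q$.
\item Hence $T_q\cap P_A=qT_q$ when $T_q\ne0$. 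When $T_q=0$ both sides are zero.
\end{enumerate}
This gives $P_A\cap T=\bigoplus_p pT_p=T^\circ$.
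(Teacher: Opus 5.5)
Your proof is correct and follows the paper's argument essentially step for step: the same rank-one endomorphism $A\to A/pA\to\langle c\rangle$ for the inclusion into $pA$, the same reduction for the reverse inclusion (order of $k(x)$, isolating a primary component by an integer multiple, then the maximal nonvanishing power of $p$), and the same purity computation for $T\cap P_A$, which you do component by component where the paper first shows $T\cap pA=pT$. One wording fix: multiplying $k$ by $n/p^{e_p}$ does not leave the $p$-component unchanged but multiplies it by an integer prime to $p$, an automorphism of $T_p[p^{e_p}]$, so the new map is still nonzero and your argument stands (the paper instead uses an integer $c\equiv1\pmod{p^k}$, $c\equiv0\pmod m$, which acts as the identity on that component).
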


\begin{proof}
Let $x\in Z_E(A)$.  Since $\Anno_E(x)\subseteq\Anno_E(\pi x)$, the latter left ideal is essential, so $\pi x\in Z_E(B)$.

Fix $p\in\mathcal P_A$.  Suppose that $x\notin pA$.  Then $x+pA$ is nonzero in the $\mathbb F_p$-vector space $A/pA$.  Choose an $\mathbb F_p$-linear functional $\lambda:A/pA\to\mathbb F_p$ with $\lambda(x+pA)=1$.  Since $T_p\ne0$, choose $0\ne c\in T_p[p]$.  Identifying $\mathbb F_p$ with $\langle c\rangle$, the map
\[
h:A\longrightarrow \langle c\rangle\le A,
\qquad
h(a)=\lambda(a+pA)c,
\]
is an endomorphism with $h(x)=c$ and $h(A)=\langle h(x)\rangle$.  If $g\in E$ satisfies $(gh)(x)=0$, then $g$ annihilates $h(A)$, hence $gh=0$.  Thus
\[
Eh\cap\Anno_E(x)=0,
\]
contrary to the essentiality of $\Anno_E(x)$.  Therefore $x\in pA$.  Since $p$ was arbitrary, $x\in P_A$.  This proves the inclusion ``$\subseteq$'' in \eqref{eq:transfer}.

Conversely, let $x\in P_A$ and suppose that $\pi x\in Z_E(B)$.  Let $0\ne f\in E$.  Since $\Anno_E(\pi x)$ is essential, there exists
\[
0\ne h\in Ef\cap\Anno_E(\pi x).
\]
Then $h(x)\in T$.  If $h(x)=0$, we are done.  Otherwise let $n$ be the order of $h(x)$.  If $nh\ne0$, then $0\ne nh\in Ef$ and $(nh)(x)=0$.

Assume now that $nh=0$.  Then $h(A)$ is a nonzero torsion group of exponent dividing $n$.  Choose a prime $p$ for which its $p$-primary component is nonzero, and write $n=p^k m$ with $(p,m)=1$.  Choose an integer $c$ such that $c\equiv1\pmod{p^k}$ and $c\equiv0\pmod m$.  Multiplication by $c$ on $h(A)$ is the projection onto that component.  Thus $h_p=ch$ is a nonzero member of $Eh\subseteq Ef$, and $h_p(A)$ is a bounded $p$-group.  This uses only a scalar multiple of $h$ and does not require a projection of $A$ onto $T_p$.  Choose $e\ge1$ minimal with $p^e h_p=0$.  Then $p^{e-1}h_p\ne0$.  Since $h_p(A)\ne0$, we have $T_p\ne0$.  As $x\in P_A$, write $x=pz$.  Then
\[
(p^{e-1}h_p)(x)=p^e h_p(z)=0.
\]
Thus every nonzero principal left ideal $Ef$ meets $\Anno_E(x)$ nontrivially.  By Lemma~\ref{lem:principal-test}, $x\in Z_E(A)$, proving \eqref{eq:transfer}.

For \eqref{eq:torsion-core}, note first that for $t\in T$ and $p\in\mathcal P_A$,
\[
t\in pA\quad\Longleftrightarrow\quad t\in pT,
\]
because $t=pa$ implies $p\pi(a)=0$ in the torsion-free group $B$, hence $a\in T$.  Therefore
\[
T\cap P_A=\bigcap_{p\in\mathcal P_A}pT.
\]
On the $q$-primary component $T_q$, multiplication by $p$ is an automorphism for $p\ne q$, while the condition at $p=q$ gives $qT_q$.  Hence $T\cap P_A=\bigoplus_q qT_q=T^\circ$.  Since $0\in Z_E(B)$, equation \eqref{eq:transfer} now yields \eqref{eq:torsion-core}.
\end{proof}

The quotient term $Z_E(B)$ in Theorem~\ref{thm:transfer} is relative to the action induced by $E$.  We now make this dependence explicit.  Let
\[
\rho:E\longrightarrow\Endo(B)
\]
be the induced ring homomorphism, and put
\[
S_A=\imma\rho,
\qquad
H_A=\ker\rho=\{f\in E:f(A)\subseteq T\}.
\]
We identify $H_A$ additively with $\Homm(A,T)$.

\begin{definition}\label{def:liftable}
A left ideal $J\le S_A$ is \emph{liftable} if there exists a left ideal $L\le E$ such that
\[
L\cap H_A=0,
\qquad
\rho(L)=J.
\]
\end{definition}

\begin{proposition}[Relative essentiality]\label{prop:relative-essentiality}
Let $I\le S_A$ be a left ideal.  Then
\begin{equation}\label{eq:relative-essentiality}
\rho^{-1}(I)\leess E
\quad\Longleftrightarrow\quad
I\cap J\ne0
\text{ for every nonzero liftable left ideal }J\le S_A.
\end{equation}
Consequently,
\begin{equation}\label{eq:relative-sing}
Z_E(B)=\{b\in B:\Anno_{S_A}(b)\text{ meets every nonzero liftable left ideal of }S_A\}.
\end{equation}
\end{proposition}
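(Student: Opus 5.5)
The plan is to reduce essentiality of $\rho^{-1}(I)$ to a test against left ideals of $E$ meeting $H_A$ trivially, and then push those left ideals forward along $\rho$. Two observations drive this. First, $\rho^{-1}(I)\supseteq\ker\rho=H_A$ because $0\in I$. Second, $\rho:E\to S_A$ is surjective by definition of $S_A$. So for every left ideal $L\le E$ the image $\rho(L)$ is a left ideal of $S_A$: indeed $\rho(e)\rho(l)=\rho(el)\in\rho(L)$.

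For ``$\Rightarrow$'' in \eqref{eq:relative-essentiality}, let $J\le S_A$ be a nonzero liftable left ideal, witnessed by $L\le E$ with $L\cap H_A=0$ and $\rho(L)=J$. Then $L\ne0$, so essentiality gives some $0\ne l\in L\cap\rho^{-1}(I)$. Since $L\cap\ker\rho=0$, we get $\rho(l)\ne0$, and hence $0\ne\rho(l)\in I\cap J$.

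For ``$\Leftarrow$'', let $0\ne L\le E$ be an arbitrary left ideal. If $L\cap H_A\ne0$, then $L$ already meets $\rho^{-1}(I)$, because $H_A\subseteq\rho^{-1}(I)$. Otherwise $L\cap H_A=0$, so $J=\rho(L)$ is a liftable left ideal. It is nonzero because $\rho|_L$ is injective and $L\ne0$. By hypothesis there is $0\ne j\in I\cap J$. Write $j=\rho(l)$ with $l\in L$; then $l\ne0$ and $l\in\rho^{-1}(I)$. Thus $\rho^{-1}(I)$ meets every nonzero left ideal of $E$, i.e.\ $\rho^{-1}(I)\leess E$.

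For \eqref{eq:relative-sing}, the $E$-action on $B$ is $f\cdot b=\rho(f)(b)$. Hence $\Anno_E(b)=\rho^{-1}\bigl(\Anno_{S_A}(b)\bigr)$, where $\Anno_{S_A}(b)$ is a left ideal of $S_A$ because $B$ is an $S_A$-module. Applying \eqref{eq:relative-essentiality} with $I=\Anno_{S_A}(b)$ gives the description directly from the definition of $Z_E(B)$.

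There is no serious obstacle here. The only points needing care are that $\rho(L)$ really is a left ideal of $S_A$, which uses surjectivity onto $S_A$, and that a nonzero element of $L$ has nonzero image, which is exactly the condition $L\cap H_A=0$.
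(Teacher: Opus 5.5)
Your proof is correct and matches the paper's: the forward direction and the identity $\Anno_E(b)=\rho^{-1}(\Anno_{S_A}(b))$ are the same. Your direct case split in the reverse direction is just the paper's contrapositive argument written out directly. The paper takes $L$ with $L\cap\rho^{-1}(I)=0$, notes $L\cap H_A=0$ because $H_A\subseteq\rho^{-1}(I)$, and pushes $L$ forward. Your explicit check that $\rho(L)$ is a left ideal of $S_A$, by surjectivity of $\rho$ onto $S_A$, fills in a point the paper leaves implicit.
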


\begin{proof}
Suppose first that $\rho^{-1}(I)\leess E$, and let $0\ne J\le S_A$ be liftable through a left ideal $L\le E$ with $L\cap H_A=0$.  Since $L\ne0$,
\[
0\ne L\cap\rho^{-1}(I).
\]
Choose $0\ne x$ in this intersection.  Then $\rho(x)\ne0$ because $L\cap H_A=0$, and $\rho(x)\in J\cap I$.

Conversely, suppose that $\rho^{-1}(I)$ is not essential.  Choose a nonzero left ideal $L\le E$ with
\[
L\cap\rho^{-1}(I)=0.
\]
Since $H_A\subseteq\rho^{-1}(I)$, we have $L\cap H_A=0$.  Thus $J=\rho(L)$ is a nonzero liftable left ideal.  If $y\in J\cap I$, write $y=\rho(x)$ with $x\in L$.  Then $x\in\rho^{-1}(I)$, so $x=0$ and $y=0$.  Hence $J\cap I=0$, proving the contrapositive of the reverse implication.

For \eqref{eq:relative-sing}, observe that
\[
\Anno_E(b)=\rho^{-1}(\Anno_{S_A}(b))
\]
for every $b\in B$, and apply \eqref{eq:relative-essentiality}.
\end{proof}

\begin{remark}\label{rem:no-swap}
Even though $B$ is torsion-free, the singular submodule in \eqref{eq:transfer} is $Z_E(B)$ for the action induced from $A$.  In general it cannot be replaced by $Z_{\Endo(B)}(B)$; see Example~\ref{ex:no-swap}.  Proposition~\ref{prop:relative-essentiality} records the relevant left-ideal information in the map $E\to S_A$.  It is an equivalence and does not, by itself, compute all liftable left ideals.
\end{remark}

\section{A simultaneous prime-lifting obstruction}

The term $P_A=\bigcap_{p\in\mathcal P_A}pA$ in Theorem~\ref{thm:transfer} contains a simultaneous divisibility condition across all primes occurring in the torsion subgroup.  We now express its image in $B=A/T$ by a canonical obstruction.

Set
\[
B_{\mathcal P}=\bigcap_{p\in\mathcal P_A}pB
\]
and
\[
D_T=
\frac{\displaystyle\prod_{p\in\mathcal P_A}T_p/pT_p}
     {\displaystyle\bigoplus_{p\in\mathcal P_A}T_p/pT_p}.
\]
Each endomorphism of $A$ preserves $T_p$ and $pT_p$, so $D_T$ is naturally an $E$-module.

For $b\in B_{\mathcal P}$ choose a lift $a\in A$ with $\pi(a)=b$.  For every $p\in\mathcal P_A$, choose $y_p\in A$ with
\[
p\pi(y_p)=b.
\]
Then $a-py_p\in T$.  Denote by $(a-py_p)_p$ its $p$-primary component and define
\begin{equation}\label{eq:delta-def}
\delta_A(b)=
\left[
\bigl((a-py_p)_p+pT_p\bigr)_{p\in\mathcal P_A}
\right]\in D_T.
\end{equation}

\begin{proposition}\label{prop:delta-well}
The map $\delta_A:B_{\mathcal P}\to D_T$ in \eqref{eq:delta-def} is a well-defined $E$-homomorphism.
\end{proposition}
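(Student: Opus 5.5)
We check well-definedness by tracking how the displayed tuple in \eqref{eq:delta-def} changes under each choice, and then check additivity and $E$-linearity directly from the formula. Two kinds of choices enter: the lift $a$ of $b$, and, for each $p\in\mathcal P_A$, the element $y_p$ with $p\pi(y_p)=b$. The point is that changing $y_p$ only moves the $p$-th coordinate inside $pT_p$. Changing $a$ moves every coordinate simultaneously, but only finitely many coordinates become nonzero modulo $pT_p$, so the change dies in the quotient by the direct sum.

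First, fix $a$ and replace $y_p$ by $y_p'$ with $p\pi(y_p')=b$. Then $p\pi(y_p-y_p')=0$ in the torsion-free group $B$, so $y_p-y_p'\in T$. Hence
\[
(a-py_p')-(a-py_p)=p(y_p-y_p')\in pT,
\]
and its $p$-primary component is $p\,(y_p-y_p')_p\in pT_p$. So each coordinate $(a-py_p)_p+pT_p$ is independent of the choice of $y_p$, even before passing to $D_T$. Note that $\mathcal P_A$ may be infinite, so all $y_p$ are chosen at once; but each coordinate depends only on its own $y_p$, so no compatibility is needed.

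Next, replace $a$ by $a'=a+t$ with $t\in T$, keeping the $y_p$. The $p$-th coordinate changes by $t_p+pT_p$. Since $t$ has finite order, $t_p=0$ for all but finitely many $p$. Thus the change lies in $\bigoplus_{p}T_p/pT_p$ and vanishes in $D_T$. I expect this to be the main place where the quotient by the direct sum is genuinely needed, and it is the only delicate point.

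Additivity follows from these two facts. If $b,b'\in B_{\mathcal P}$ with data $(a,y_p)$ and $(a',y_p')$, then $(a+a',\,y_p+y_p')$ are legitimate choices for $b+b'$, because $p\pi(y_p+y_p')=b+b'$. Taking $p$-components is additive on $T$, so $\delta_A(b+b')=\delta_A(b)+\delta_A(b')$.

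For $E$-linearity, let $f\in E$ and write $\bar f=\rho(f)$. The subgroup $B_{\mathcal P}$ is $E$-stable, since $\bar f(pB)\subseteq pB$. The data $(f(a),f(y_p))$ are valid choices for $\bar f(b)$, because
\[
\pi f(a)=\bar f\pi(a)=\bar f(b),\qquad p\pi f(y_p)=\bar f(b).
\]
Moreover $f$ commutes with taking $p$-primary components on $T$: $f(T_q)\subseteq T_q$, so $f(t)_p=f(t_p)$. Hence
\[
(f(a)-pf(y_p))_p=f\bigl((a-py_p)_p\bigr),
\]
and $\delta_A(\bar f b)=f\cdot\delta_A(b)$ for the natural action of $E$ on $D_T$ recorded before the statement. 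That action is well defined because $f$ preserves $T_p$ and $pT_p$, so it acts coordinatewise on the product and preserves the direct sum.
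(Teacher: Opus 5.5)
Your proof is correct and follows essentially the same route as the paper. Independence of $y_p$ comes from torsion-freeness of $B$, independence of the lift $a$ comes from $t\in T$ having only finitely many nonzero primary components, and additivity and $E$-linearity follow by transporting the choices $(a,y_p)$; your extra checks that $B_{\mathcal P}$ is $E$-stable and that endomorphisms commute with taking $p$-primary components are details the paper leaves implicit.
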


\begin{proof}
Fix $p$.  If $y_p'$ is another choice, then
\[
p\pi(y_p'-y_p)=0.
\]
Since $B$ is torsion-free, $y_p'-y_p\in T$.  Hence the $p$-primary components of $a-py_p'$ and $a-py_p$ differ by an element of $pT_p$, so the $p$th residue class is independent of $y_p$.

If the lift $a$ is replaced by $a+t$ with $t\in T$, then the $p$th residue changes by $t_p+pT_p$.  Every element $t\in T=\bigoplus_pT_p$ has only finitely many nonzero primary components, so the total change lies in the direct sum
\[
\bigoplus_{p\in\mathcal P_A}T_p/pT_p.
\]
Thus the class in $D_T$ is independent of the lift $a$.

Additivity follows by choosing sums of lifts and sums of the corresponding $y_p$'s.  If $e\in E$, then $e(a)$ and $e(y_p)$ are valid choices for $e(b)$, and
\[
e(a)-pe(y_p)=e(a-py_p).
\]
Taking $p$-primary residues gives $\delta_A(eb)=e\delta_A(b)$.
\end{proof}

\begin{theorem}\label{thm:delta-kernel}
The obstruction $\delta_A$ satisfies
\begin{equation}\label{eq:delta-kernel}
\ker\delta_A=\pi(P_A).
\end{equation}
\end{theorem}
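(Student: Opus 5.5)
We will prove the two inclusions separately, working throughout with the explicit formula \eqref{eq:delta-def} and the independence of choices established in Proposition~\ref{prop:delta-well}. First note that $\pi(P_A)\subseteq B_{\mathcal P}$ trivially, since $x\in pA$ gives $\pi x\in pB$, so both sides of \eqref{eq:delta-kernel} are subgroups of $B_{\mathcal P}$.

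For $\pi(P_A)\subseteq\ker\delta_A$, let $b=\pi(x)$ with $x\in P_A$. We take the lift $a=x$. For each $p\in\mathcal P_A$ we write $x=pz_p$ and choose $y_p=z_p$, which is admissible because $p\pi(z_p)=\pi(x)=b$. Then $a-py_p=0$ for every $p$, so the representing family in \eqref{eq:delta-def} is identically zero and $\delta_A(b)=0$.

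For $\ker\delta_A\subseteq\pi(P_A)$, let $b\in B_{\mathcal P}$ with $\delta_A(b)=0$, and fix a lift $a$ and elements $y_p$ as in the definition. Put $t^{(p)}=a-py_p\in T$. Vanishing in $D_T$ means there is a finite set $F\subseteq\mathcal P_A$ such that $t^{(p)}_p\in pT_p$ for all $p\in\mathcal P_A\setminus F$. We then correct the lift: set
\[
a'=a-\sum_{p\in F}t^{(p)}_p,
\]
which still satisfies $\pi(a')=b$. By the lift-change computation in Proposition~\ref{prop:delta-well}, the new $p$th residues are $t^{(p)}_p-t^{(p)}_p=0$ for $p\in F$, and remain $t^{(p)}_p\in pT_p$ for $p\notin F$, because the correction has no $p$-component for $p\notin F$. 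Thus for every $p\in\mathcal P_A$ we have $(a'-py_p)_p\in pT_p$.

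The remaining step, which is the main point, is to show that $a'\in pA$ for each $p\in\mathcal P_A$. We write $s=a'-py_p\in T$ and decompose $s=s_p+s'$, where $s'\in\bigoplus_{q\ne p}T_q$. Multiplication by $p$ is an automorphism of $\bigoplus_{q\ne p}T_q$, so $s'\in pT$. Also $s_p\in pT_p$ by the previous step. Hence $s\in pT\subseteq pA$, and therefore $a'=py_p+s\in pA$. Since $p\in\mathcal P_A$ was arbitrary, $a'\in P_A$, and so $b=\pi(a')\in\pi(P_A)$. The only care needed in this last step is that an element of $T$ may have nonzero components at infinitely many primes across different $p$; this is handled by the finite correction set $F$ together with the prime-to-$p$ divisibility of $\bigoplus_{q\ne p}T_q$.
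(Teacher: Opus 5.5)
Your proof is correct and follows the paper's argument. The reverse inclusion uses the same finite correction of the lift over $F$ and the same fact that $p$ acts invertibly on $\bigoplus_{q\ne p}T_q$. The only difference is in the forward inclusion: you choose $y_p$ with $x=py_p$, so all residues vanish outright, and invoke the independence of choices from Proposition~\ref{prop:delta-well}. The paper instead proves $c_p=0\Longleftrightarrow a\in pA$ for arbitrary choices, using torsion-freeness of $B$ again.
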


\begin{proof}
Fix $b\in B_{\mathcal P}$ and a lift $a\in A$.  For $p\in\mathcal P_A$, let
\[
c_p=(a-py_p)_p+pT_p.
\]
We claim that
\begin{equation}\label{eq:cp-test}
c_p=0\quad\Longleftrightarrow\quad a\in pA.
\end{equation}
Indeed, if $c_p=0$, then the $p$-primary component of $a-py_p$ belongs to $pT_p$.  On every $q$-primary component with $q\ne p$, multiplication by $p$ is an automorphism.  Hence $a-py_p\in pT$, so $a\in pA$.  Conversely, if $a=pz$, then
\[
a-py_p=p(z-y_p)\in T.
\]
Since $p\pi(z-y_p)=0$ and $B$ is torsion-free, $z-y_p\in T$, whence $a-py_p\in pT$ and $c_p=0$.

If $a\in P_A$, then \eqref{eq:cp-test} gives $c_p=0$ for every $p$, so $\delta_A(\pi a)=0$.  Hence $\pi(P_A)\subseteq\ker\delta_A$.

Conversely, suppose $\delta_A(b)=0$.  Then the family $(c_p)_p$ represents an element of the direct sum, so the set
\[
F=\{p\in\mathcal P_A:c_p\ne0\}
\]
is finite.  Choose representatives $u_p\in T_p$ of $c_p$ for $p\in F$ and put
\[
t=-\sum_{p\in F}u_p\in T,
\qquad a'=a+t.
\]
Replacing $a$ by $a'$ changes the $p$th residue by $t_p+pT_p$.  This cancels $c_p$ for $p\in F$ and leaves it zero for $p\notin F$.  Thus a single element $t\in T$ corrects all the residues.  By \eqref{eq:cp-test}, $a'\in pA$ for every $p\in\mathcal P_A$, hence $a'\in P_A$.  Since $\pi(a')=b$, we have $b\in\pi(P_A)$.
\end{proof}

\begin{remark}
If only finitely many primes occur in $T$, then $D_T=0$ and $\pi(P_A)=B_{\mathcal P}$.  With infinitely many primes, the obstruction can be nonzero, as the following example shows.
\end{remark}

\begin{example}\label{ex:product-primes}
Let $A=\prod_p\Z/p\Z$, where $p$ runs over all primes.  Then
\[
T=\bigoplus_p\Z/p\Z,\qquad T^\circ=0,\qquad P_A=0.
\]
Indeed, an element annihilated by a positive integer has support contained in the finite set of primes dividing that integer.  Also, $pA$ consists precisely of the elements whose $p$th coordinate is zero.

The quotient $B=A/T$ is torsion-free and divisible.  For divisibility, solve $ny=a$ coordinatewise at primes not dividing $n$, and put the remaining coordinates of $y$ equal to zero.  The difference $a-ny$ has finite support and hence lies in $T$.  Torsion-freeness follows because $na\in T$ forces $a$ to have finite support.  Thus $B_{\mathcal P}=B$ and $D_T$ is naturally $B$.

For each prime $p$, let $e_p\in E$ be the coordinate projection followed by inclusion.  If $0\ne f\in E$, some $e_pf$ is nonzero and has image in $T$.  Consequently $H_A\leess E$, and $Z_E(B)=B$.  On the other hand, $(py_p)_p=0$ in the definition of $\delta_A$, so $\delta_A(b)=b$ under the identification $D_T=B$.  Hence $\ker\delta_A=0$ and Theorem~\ref{thm:transfer} gives $Z_E(A)=0$.  Individual prime divisibility in $B$ therefore need not admit a simultaneous lift to $A$.
\end{example}

\section{An extension description of singular submodules}

Define
\begin{equation}\label{eq:QA}
Q_A=Z_E(B)\cap\ker\delta_A.
\end{equation}
Write $K_A=\ker\delta_A$.  By Theorem~\ref{thm:delta-kernel} and the computation $T\cap P_A=T^\circ$ in Theorem~\ref{thm:transfer}, there is a short exact sequence
\begin{equation}\label{eq:ambient-extension}
0\longrightarrow T^\circ\longrightarrow P_A
\overset{\pi}{\longrightarrow}K_A\longrightarrow0.
\end{equation}
Let $\eta_A\in\Extt_E^1(K_A,T^\circ)$ be its class, let $j_A:Q_A\hookrightarrow K_A$ be the inclusion, and define
\begin{equation}\label{eq:zeta-definition}
\zeta_A=j_A^*(\eta_A)\in\Extt_E^1(Q_A,T^\circ).
\end{equation}
Thus the extension class is defined by the lifting data before identifying its middle term with the singular submodule.

\begin{theorem}\label{thm:short-exact}
There is a canonical short exact sequence of left $E$-modules
\begin{equation}\label{eq:short-exact}
0\longrightarrow T^\circ
\longrightarrow Z_E(A)
\overset{\pi}{\longrightarrow}Q_A
\longrightarrow0.
\end{equation}
Its extension class is $\zeta_A$ from \eqref{eq:zeta-definition}.
\end{theorem}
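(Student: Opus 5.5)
The plan is to realize $Z_E(A)$ as the pullback of the ambient extension \eqref{eq:ambient-extension} along $j_A$, and then read off both claims from that. Theorem~\ref{thm:transfer} gives $Z_E(A)=P_A\cap\pi^{-1}(Z_E(B))$. So $Z_E(A)$ is exactly the preimage under $\pi|_{P_A}:P_A\to K_A$ of the submodule $\pi(P_A)\cap Z_E(B)$. By Theorem~\ref{thm:delta-kernel}, $\pi(P_A)=K_A$, so this submodule is $K_A\cap Z_E(B)=Q_A$. Thus $Z_E(A)=(\pi|_{P_A})^{-1}(Q_A)$, and it is an $E$-submodule of $P_A$ (which is fully invariant, being an intersection of the $pA$).

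For the short exact sequence \eqref{eq:short-exact}, I check exactness term by term.

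\emph{Surjectivity.} Since $\pi:P_A\to K_A$ is onto and $Q_A\subseteq K_A$, every element of $Q_A$ has a preimage in $P_A$. That preimage lies in $\pi^{-1}(Q_A)\cap P_A=Z_E(A)$.

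\emph{Kernel.} The kernel of $\pi|_{Z_E(A)}$ is $Z_E(A)\cap T$, which equals $T^\circ$ by \eqref{eq:torsion-core}. Equivalently, it is $T\cap P_A=T^\circ$ together with the fact that $0\in Q_A$.

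\emph{Canonicity.} All maps are inclusions and restrictions of $\pi$, so they are $E$-linear and involve no choices.

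For the extension class, I use the standard description of $j_A^*$ on Yoneda $\Extt^1$. If $0\to T^\circ\to P_A\to K_A\to0$ represents $\eta_A$, then $j_A^*(\eta_A)$ is represented by the pullback $P_A\times_{K_A}Q_A$. I identify this pullback with $Z_E(A)$ via
\[
(a,q)\mapsto a,
\]
whose inverse is $z\mapsto(z,\pi z)$. This is well defined because $\pi(a)=j_A(q)$ forces $a\in(\pi|_{P_A})^{-1}(Q_A)=Z_E(A)$. The identification commutes with the inclusion of $T^\circ$ and with the projections to $Q_A$. The map of short exact sequences
\[
(T^\circ\to Z_E(A)\to Q_A)\to(T^\circ\to P_A\to K_A),
\]
given by the identity, the inclusion, and $j_A$, therefore exhibits \eqref{eq:short-exact} as the pullback. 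Hence its class is $\zeta_A$.

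I expect no real obstacle. The only points needing care are two facts already established earlier: surjectivity of $P_A\to K_A$ (Theorem~\ref{thm:delta-kernel}) and the identification of the kernel as $T^\circ$ (\eqref{eq:torsion-core}). The one potential subtlety is the convention for $\Extt^1_E$. I would state explicitly that Yoneda $\Extt^1_E$ over the ring $E$ is used, which is naturally isomorphic to derived-functor $\Extt$, so that pullback computes $j_A^*$.
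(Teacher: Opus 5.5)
Your proposal is correct and follows the paper's proof. Both identify $Z_E(A)$ with the middle term $P_A\cap\pi^{-1}(Q_A)$ of the pullback of the ambient extension along $j_A$, using Theorem~\ref{thm:transfer} together with $\pi(P_A)=K_A$ from Theorem~\ref{thm:delta-kernel}; you just spell out the exactness checks and the explicit isomorphism $P_A\times_{K_A}Q_A\cong Z_E(A)$ that the paper leaves implicit.
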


\begin{proof}
The pullback of \eqref{eq:ambient-extension} along $j_A$ has middle term $P_A\cap\pi^{-1}(Q_A)$.  Since $\pi(P_A)=K_A$ and $Q_A=Z_E(B)\cap K_A$, Theorem~\ref{thm:transfer} identifies this middle term with $Z_E(A)$.  Exactness and the statement about the extension class follow.  In particular, surjectivity onto $Q_A$ follows from $\pi(P_A)=K_A$, and the kernel is $T^\circ$.
\end{proof}

For the next theorem we use the standard interpretation of $\Extt^1$ in terms of equivalence classes of short exact sequences; see \cite{Rotman2009}.  If $C\le_E T^\circ$, write
\[
q_C:T^\circ\longrightarrow T^\circ/C,
\]
and if $L\le_E Q_A$, write $i_L:L\hookrightarrow Q_A$.

\begin{theorem}[Ext/Hom parametrization]\label{thm:ExtHom}
Let $C\le_E T^\circ$ and $L\le_E Q_A$.  There exists an $E$-submodule $N\le Z_E(A)$ satisfying
\begin{equation}\label{eq:CL-data}
N\cap T^\circ=C,
\qquad
\pi(N)=L
\end{equation}
if and only if
\begin{equation}\label{eq:Ext-obstruction}
(q_C)_*(i_L)^*(\zeta_A)=0
\quad\text{in}\quad
\Extt_E^1(L,T^\circ/C).
\end{equation}
Whenever \eqref{eq:Ext-obstruction} holds, the set of all $N$ satisfying \eqref{eq:CL-data} is a torsor under
\begin{equation}\label{eq:Hom-torsor}
\Homm_E(L,T^\circ/C).
\end{equation}
Consequently, the singular $E$-submodules of $A$ are completely described by the pairs $(C,L)$ satisfying \eqref{eq:Ext-obstruction}, together with a choice of splitting of the corresponding extension.
\end{theorem}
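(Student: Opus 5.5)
The plan is to reduce everything to the canonical extension \eqref{eq:short-exact} of Theorem~\ref{thm:short-exact}, whose class is $\zeta_A$. Given $C\le_E T^\circ$ and $L\le_E Q_A$, first pull back \eqref{eq:short-exact} along $i_L$. This gives the $E$-submodule $Z_L=Z_E(A)\cap\pi^{-1}(L)$ and an exact sequence $0\to T^\circ\to Z_L\to L\to0$ with class $(i_L)^*(\zeta_A)$. Then push out along $q_C$. Since $C\subseteq T^\circ\subseteq Z_L$, the pushout is realised concretely as $0\to T^\circ/C\to Z_L/C\to L\to0$, and its class is $(q_C)_*(i_L)^*(\zeta_A)$.

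The key step is a bijection. On one side are the $E$-submodules $N\le Z_E(A)$ with $N\cap T^\circ=C$ and $\pi(N)=L$. On the other are the $E$-submodules $\bar N\le Z_L/C$ that are complements of $T^\circ/C$, meaning $\bar N\cap(T^\circ/C)=0$ and $\bar N+T^\circ/C=Z_L/C$. The correspondence is $N\mapsto N/C$, with inverse $\bar N\mapsto$ its preimage in $Z_L$. I will check the following.
\begin{enumerate}[label=\textup{(\alph*)}]
\item Any such $N$ contains $C$ and lies in $Z_L$, since $\pi(N)=L$.
\item $N\cap T^\circ=C$ translates to $(N/C)\cap(T^\circ/C)=0$.
\item $\pi(N)=L$ translates to $N+T^\circ=Z_L$, because $\ker(\pi|_{Z_L})=T^\circ$.
\end{enumerate}
Conversely, the preimage of a complement $\bar N$ intersects $T^\circ$ exactly in $C$ and maps onto $L$. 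Such complements are exactly the images of $E$-linear sections $s:L\to Z_L/C$ of the projection. So the set of $N$ satisfying \eqref{eq:CL-data} is in bijection with the set of splittings of the pushed-out sequence.

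The rest is standard homological algebra (Rotman). A short exact sequence of $E$-modules splits iff its class in $\Extt^1_E$ vanishes, which gives the criterion \eqref{eq:Ext-obstruction}. When a splitting exists, two sections $s,s'$ differ by an $E$-homomorphism $L\to T^\circ/C$. Conversely, adding any $\varphi\in\Homm_E(L,T^\circ/C)$ to a section gives another section. Hence the set of sections, and therefore the set of $N$, is a torsor under \eqref{eq:Hom-torsor}, acting by $N\mapsto\{n+\tilde\varphi(\pi n)\}$ at the level of $Z_L/C$. The action is free because distinct sections have distinct images. The final sentence then follows from Lemma~\ref{lem:submodule-sing}: singular $E$-submodules are exactly the $E$-submodules $N\le Z_E(A)$, and each such $N$ determines its pair $(C,L)=(N\cap T^\circ,\pi(N))$, with $L\le Q_A$ by \eqref{eq:short-exact}.

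The only delicate point is the naturality claim in the first step. The concrete module $Z_L/C$ must represent $(q_C)_*(i_L)^*(\zeta_A)$, not merely some extension of $L$ by $T^\circ/C$. For this I will verify the pullback and pushout universal properties directly. For the pullback, the map $Z_L\to Z_E(A)$ is the inclusion, and the square over $i_L$ commutes. For the pushout, the quotient $Z_L\to Z_L/C$ together with $q_C$ forms a morphism of extensions that is the identity on $L$; any such morphism exhibits the target as the pushout. Everything else is formal.
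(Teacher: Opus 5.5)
Your proposal is correct and follows essentially the same route as the paper: pull back \eqref{eq:short-exact} along $i_L$, push out along $q_C$ to get $0\to T^\circ/C\to Z_L/C\to L\to0$, identify the submodules $N$ with the images of $E$-linear splittings, and observe that differences of splittings form a $\Homm_E(L,T^\circ/C)$-torsor. Your $Z_L=Z_E(A)\cap\pi^{-1}(L)$ coincides with the paper's $P_A\cap\pi^{-1}(L)$ by Theorem~\ref{thm:transfer}, since $L\le Q_A\le Z_E(B)$. Your checks (a)--(c) and your verification that $Z_L/C$ realizes the pullback--pushout class spell out steps that the paper states only briefly.
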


\begin{proof}
Set $Z_L=P_A\cap\pi^{-1}(L)$.  As $L\le Q_A$, Theorem~\ref{thm:transfer} gives $Z_L\le Z_E(A)$.  Pulling back the sequence of Theorem~\ref{thm:short-exact} along $i_L$ gives
\[
0\to T^\circ\to Z_L\to L\to0,
\]
and then push out along $q_C$ to obtain
\begin{equation}\label{eq:pushed}
0\to T^\circ/C\to Z_L/C\to L\to0.
\end{equation}
Its extension class is exactly $(q_C)_*(i_L)^*(\zeta_A)$.

Suppose that $N$ satisfies \eqref{eq:CL-data}.  Then $N\le Z_L$, and $N/C$ maps isomorphically onto $L$ while meeting $T^\circ/C$ trivially.  Hence $N/C$ is a complement to $T^\circ/C$ in \eqref{eq:pushed}; in particular, \eqref{eq:pushed} splits and its class vanishes.

Conversely, if the class vanishes, choose an $E$-linear splitting $s:L\to Z_L/C$ of \eqref{eq:pushed}.  Let $N$ be the inverse image of $s(L)$ under $Z_L\to Z_L/C$.  Then $N$ is an $E$-submodule, $N\cap T^\circ=C$, and $\pi(N)=L$.

These constructions are inverse.  In fact, for a given $N$, the restriction of $Z_L/C\to L$ to $N/C$ is an isomorphism, so its inverse is the unique splitting with image $N/C$.  Thus two splittings give the same $N$ only if they agree.  The difference of two splittings is an $E$-homomorphism $L\to T^\circ/C$, and adding any such homomorphism to one splitting gives another.  The action of $\Homm_E(L,T^\circ/C)$ on the splittings, and hence on the corresponding submodules, is therefore free and transitive.  No preferred splitting is asserted.
\end{proof}

\begin{corollary}\label{cor:structural-answer}
For an arbitrary abelian group $A$, the lattice of singular $\Endo(A)$-submodules is determined by the following data:
\begin{enumerate}[label=\textup{(\roman*)}]
\item the torsion core $T^\circ=\bigoplus_p pT_p$;
\item the relative singular submodule $Z_E(B)$, equivalently the liftable-ideal condition \eqref{eq:relative-sing};
\item the simultaneous prime-lifting obstruction $\delta_A$;
\item the extension class $\zeta_A\in\Extt_E^1(Q_A,T^\circ)$.
\end{enumerate}
More precisely, all singular submodules are given by Theorem~\ref{thm:ExtHom}.
\end{corollary}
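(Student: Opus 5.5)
The corollary is essentially a repackaging of Theorem~\ref{thm:ExtHom} together with Lemma~\ref{lem:submodule-sing}. The plan is to show that the map sending a singular $E$-submodule $N$ to the pair $(C,L)=(N\cap T^\circ,\pi(N))$ plus a splitting is a bijection onto data built from (i)--(iv). Then I will check that the lattice operations are recovered from these data.

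First, let $N$ be any singular $E$-submodule of $A$. By Lemma~\ref{lem:submodule-sing}, $N\le Z_E(A)$. Equation~\eqref{eq:torsion-core} of Theorem~\ref{thm:transfer} gives $N\cap T\le T^\circ$, so $C=N\cap T^\circ=N\cap T$ is an $E$-submodule of $T^\circ$. By Theorem~\ref{thm:short-exact}, $L=\pi(N)$ is an $E$-submodule of $Q_A=Z_E(B)\cap\ker\delta_A$. The module $Q_A$ is determined by items (ii) and (iii): $Z_E(B)$ is given by Proposition~\ref{prop:relative-essentiality}, and $\ker\delta_A=\pi(P_A)$ by Theorem~\ref{thm:delta-kernel}. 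The module $T^\circ$ is item (i). Thus every singular submodule yields an admissible pair $(C,L)$. By Theorem~\ref{thm:ExtHom}, such a pair occurs exactly when $(q_C)_*(i_L)^*(\zeta_A)=0$, a condition expressed through item (iv).

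Conversely, Theorem~\ref{thm:ExtHom} states that for each admissible pair, the submodules $N$ with these invariants correspond bijectively to splittings of \eqref{eq:pushed}. That sequence is determined by $\zeta_A$ up to equivalence, and the splittings form a $\Homm_E(L,T^\circ/C)$-torsor. Every submodule arising this way lies in $Z_E(A)$, so it is singular by Lemma~\ref{lem:submodule-sing}. Hence the assignment $N\mapsto (C,L,s)$ is a bijection between $\SingSubo_E(A)$ and the set of such triples.

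The main obstacle is the word ``lattice''. Determining the set of singular submodules is not by itself the same as determining their order. I would handle this by observing that the triples sit inside concrete data. The middle term $Z_E(A)=P_A\cap\pi^{-1}(Q_A)$ is realized as the pullback extension of class $\zeta_A$, and $N$ is the preimage in $Z_L$ of $s(L)$. Then $N\le N'$ holds iff $C\le C'$, $L\le L'$, and $s$ is compatible with $s'$ modulo $C'$ inside $Z_{L'}/C'$. This comparison is computed in the extension $Z_E(A)$ itself, which is determined up to isomorphism by $\zeta_A$. So the partial order, and hence the lattice, is recovered from (i)--(iv). Meets and joins are intersections and sums inside $Z_E(A)$; the set is closed under both by Lemma~\ref{lem:sing-basic} and Lemma~\ref{lem:submodule-sing}, since sums and intersections of submodules of $Z_E(A)$ remain in $Z_E(A)$.
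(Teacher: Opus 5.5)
Your proposal is correct and follows the paper's route. The paper gives no separate argument: it treats the corollary as a direct restatement of Theorem~\ref{thm:ExtHom}, together with Theorem~\ref{thm:short-exact} and Lemma~\ref{lem:submodule-sing}, and that is the repackaging you carry out. Your added check that the order is recovered makes explicit the lattice-level claim the paper leaves implicit. It works either via compatibility of splittings in $Z_{L'}/C'$ or, more simply, because $\SingSubo_E(A)=\Sub_E(Z_E(A))$ and $Z_E(A)$ is determined by $\zeta_A$ up to equivalence of extensions.
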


\begin{remark}\label{rem:torsion-free-boundary}
If $T=0$, then $P_A=A$, $B=A$, $H_A=0$, $S_A=E$, $\delta_A=0$, and $Q_A=Z_E(A)$.  Every left ideal is liftable.  Thus the relative-essentiality criterion becomes the original singularity criterion, and \eqref{eq:short-exact} becomes
\[
0\longrightarrow0\longrightarrow Z_E(A)
\overset{\mathrm{id}}{\longrightarrow}Z_E(A)\longrightarrow0.
\]
The extension description is exact for all $A$, but in this case gives no further determination of $Z_E(A)$.  Corollary~\ref{cor:fuchs} remains a general representation, whereas a more explicit treatment of arbitrary torsion-free groups requires additional information about their endomorphism action.
\end{remark}

\section{Consequences and test cases}

We record several immediate consequences.  Besides being useful in their own right, they provide consistency checks for the general formulas.

\subsection{Torsion groups}

\begin{corollary}\label{cor:torsion}
If $A$ is a torsion abelian group, then
\begin{equation}\label{eq:torsion-formula}
Z_{\Endo(A)}(A)=\bigoplus_p pA_p.
\end{equation}
Hence the singular $\Endo(A)$-submodules are precisely the fully invariant subgroups contained in $\bigoplus_p pA_p$.
\end{corollary}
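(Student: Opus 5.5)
This follows directly from Theorem~\ref{thm:transfer}. When $A$ is torsion we have $T=A$, so $B=A/T=0$ and $\pi$ is the zero map onto the zero module. Then $Z_E(B)=0$ and $\pi^{-1}(Z_E(B))=A$, and \eqref{eq:transfer} reduces to $Z_E(A)=P_A$. To compute $P_A$ itself I will use \eqref{eq:torsion-core} rather than recompute it: that identity gives $Z_E(A)\cap T=T^\circ=\bigoplus_p pT_p$. Since $T=A$, the left-hand side is $Z_E(A)$, and $T_p=A_p$. Together these give \eqref{eq:torsion-formula}.

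As a sanity check, I will verify the equality $P_A=\bigoplus_p pA_p$ by hand for a torsion group. Write $A=\bigoplus_q A_q$. For a prime $p$ with $A_p\neq0$, we have $pA=\bigoplus_q pA_q$. Multiplication by $p$ is an automorphism of $A_q$ when $q\neq p$, so $pA=pA_p\oplus\bigoplus_{q\neq p}A_q$. Intersecting over all $p\in\mathcal P_A$ gives componentwise $pA_p$ on each occurring prime. Primes with $A_p=0$ contribute nothing, since then $pA_p=0=A_p$. This is exactly the argument already given in the proof of Theorem~\ref{thm:transfer}, so it needs only to be cited.

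The final sentence of the corollary follows from Lemma~\ref{lem:submodule-sing}: an $E$-submodule $N$ is singular if and only if $N\le Z_E(A)$. Also, $E$-submodules of $A$ are exactly the fully invariant subgroups. Equivalently, it is \eqref{eq:sing-lattice} combined with the formula just obtained.

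I expect no real obstacle. The only point needing care is the degenerate case: $B=0$ is a legitimate $E$-module, its singular submodule is $0$, and the preimage under $\pi$ is all of $A$. If $A=0$, both sides are zero trivially.
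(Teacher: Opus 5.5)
Your proposal is correct and is essentially the paper's own proof: with $T=A$ and $B=0$, the torsion-core identity \eqref{eq:torsion-core} gives \eqref{eq:torsion-formula} directly. The description of all singular submodules then follows from Lemma~\ref{lem:submodule-sing}, exactly as you argue, and your hand check that $P_A=\bigoplus_p pA_p$ is also correct.
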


\begin{proof}
Here $T=A$ and $B=0$.  Equation \eqref{eq:torsion-core} gives \eqref{eq:torsion-formula}; the statement about all singular submodules follows from Lemma~\ref{lem:submodule-sing}.
\end{proof}

Following \cite[Definition~5.3.2]{Lee2010}, call $A$ \emph{endo-nonsingular} if $Z_{\Endo(A)}(A)=0$ and \emph{endosingular} if $Z_{\Endo(A)}(A)=A$.  A torsion group is called elementary here if each $p$-component is an $\Fp$-vector space; different primes are allowed.

\begin{corollary}\label{cor:torsion-extremes}
Let $A$ be torsion.
\begin{enumerate}[label=\textup{(\roman*)}]
\item $A$ is endo-nonsingular if and only if $A$ is elementary.
\item $A$ is endosingular if and only if $A$ is divisible.
\end{enumerate}
\end{corollary}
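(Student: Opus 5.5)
Both parts follow directly from Corollary~\ref{cor:torsion}, which gives $Z_{\Endo(A)}(A)=\bigoplus_p pA_p$ for torsion $A$. Since $A=\bigoplus_p A_p$ and the sum $\bigoplus_p pA_p$ is taken componentwise, each extreme reduces to a condition on every primary component separately. No further module theory is needed.

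For (i), $A$ is endo-nonsingular exactly when $\bigoplus_p pA_p=0$. This holds if and only if $pA_p=0$ for every prime $p$, that is, if and only if each $A_p$ is annihilated by $p$. A $p$-group annihilated by $p$ is an $\Fp$-vector space, so this is precisely the definition of elementary adopted above (different primes allowed).

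For (ii), $A$ is endosingular exactly when $\bigoplus_p pA_p=A=\bigoplus_p A_p$. Since $pA_p\le A_p$ and the decomposition is direct, this is equivalent to $pA_p=A_p$ for all $p$. For a $p$-group $A_p$, the condition $pA_p=A_p$ is equivalent to divisibility of $A_p$: multiplication by any $q\ne p$ is already an automorphism of $A_p$, and $p$-divisibility gives $p^k$-divisibility by iteration. Finally, a direct sum is divisible if and only if each summand is, and $A$ is divisible if and only if $pA=A$ for every prime $p$, where $pA=\bigoplus_q pA_q$ and $pA_q=A_q$ for $q\ne p$. So $A$ is divisible exactly when $pA_p=A_p$ for all $p$.

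There is no real obstacle. The only point requiring care is the observation that $pA_q=A_q$ for $q\ne p$, which lets divisibility of $A$ be checked prime by prime on the matching components.
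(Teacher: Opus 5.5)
Your proposal is correct and follows essentially the same route as the paper: both extremes are read off componentwise from $Z_{\Endo(A)}(A)=\bigoplus_p pA_p$ in Corollary~\ref{cor:torsion}. You also spell out why $pA_p=A_p$ for every $p$ is equivalent to divisibility of $A$ (coprime multiplications are automorphisms of a $p$-group, and divisibility passes to and from direct summands), a step the paper leaves implicit.
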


\begin{proof}
By \eqref{eq:torsion-formula}, $Z_{\Endo(A)}(A)=0$ exactly when $pA_p=0$ for every $p$, which is equivalent to each $A_p$ being elementary.  Likewise $Z_{\Endo(A)}(A)=A$ exactly when $pA_p=A_p$ for every $p$, i.e. each $p$-component is divisible.
\end{proof}

Hill, Hill and Ullery prove that a torsion abelian group is Lambek torsion over its endomorphism ring if and only if it is divisible \cite[Theorem~3]{HillHillUllery2001}.  Together with Corollary~\ref{cor:torsion-extremes}, this shows that, for torsion abelian groups, being endosingular and being Lambek torsion over the endomorphism ring are equivalent conditions on the whole group.  This comparison does not identify the two notions for arbitrary modules; Proposition~\ref{prop:lambek-comparison} gives a counterexample.  Corollary~\ref{cor:torsion} determines the entire singular submodule, including when it is a proper nonzero subgroup.

For a $p$-group $A$, the formula is $Z_{\Endo(A)}(A)=pA$.  In particular, it gives $Z_{\Endo(\Z/4\Z)}(\Z/4\Z)=2(\Z/4\Z)$, the example in \cite[Example~5.3.11]{Lee2010}, and gives $Z_{\Endo(\Z(p^\infty))}(\Z(p^\infty))=\Z(p^\infty)$, consistent with \cite[Example~5.3.19]{Lee2010}.

\subsection{Rank-one torsion-free groups}

\begin{corollary}\label{cor:rank-one}
Every nonzero torsion-free abelian group of rank one is endo-nonsingular.
\end{corollary}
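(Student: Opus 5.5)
Let $A$ be a nonzero torsion-free group of rank one and $E=\Endo(A)$. The plan is to use the classical fact that $E$ embeds in $\Q$: every endomorphism of a rank-one torsion-free group is multiplication by a rational number. To see this, fix $0\ne a_0\in A$. Any $f\in E$ satisfies $nf(a_0)=m a_0$ for some integers $n\ne0$ and $m$, because $A$ has rank one. For any other $a\in A$ there are nonzero integers $k,l$ with $ka=la_0$. Then $nkf(a)=lnf(a_0)=lm a_0=mka$, and torsion-freeness gives $f(a)=(m/n)a$. Hence $E$ is isomorphic to a subring of $\Q$, so $E$ is a commutative integral domain.

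Now I apply Proposition~\ref{prop:lambek-comparison}, or argue directly. In a commutative domain every nonzero ideal is essential, so $x\in Z_E(A)$ if and only if $\Anno_E(x)\ne0$. If $x\ne0$ and $f\in E$ satisfies $f(x)=0$, write $f$ as multiplication by $m/n$. Then $mx=0$, which forces $m=0$ because $A$ is torsion-free. Hence $f=0$, so $\Anno_E(x)=0$ and $x\notin Z_E(A)$. Therefore $Z_E(A)=0$.

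An alternative route avoids identifying $E$. Take $f=1_A$ in Lemma~\ref{lem:principal-test}: singularity of $x$ would require some $g\ne0$ with $g(x)=0$. For a rank-one group, a nonzero endomorphism has zero kernel. Indeed, if $g(x)=0$ with $x\ne0$, then every $a$ satisfies $ka=lx$ for some integers with $k\ne0$, so $kg(a)=0$ and thus $g(a)=0$. So nothing beyond the elementary rank-one calculation is needed.

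The only point needing care is the rational-multiplication (or injectivity) argument, and it is routine.
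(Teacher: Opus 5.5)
Your proof is correct and follows the paper's argument: you identify $E$ with a subring of $\Q$ acting by rational multiplication, deduce $\Anno_E(x)=0$ for every $x\ne0$, and note that the zero ideal is not essential in the nonzero ring $E$. Your alternative route, via Lemma~\ref{lem:principal-test} with $f=1_A$, rests on the same fact that nonzero endomorphisms are injective, and your explicit verification of the rational-multiplication claim supplies detail the paper leaves implicit.
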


\begin{proof}
Such a group $A$ embeds in $\Q$, and every endomorphism is multiplication by a rational number belonging to a subring $E\le\Q$.  If $0\ne a\in A$, then $fa=0$ implies $f=0$.  Thus $\Anno_E(a)=0$.  Since $E\ne0$, the zero left ideal is not essential in $E$.  Hence no nonzero element of $A$ is singular.
\end{proof}

The rank-one restriction is substantial.  For example, taking $S=\Z$ in \cite[Example~3.11]{Faticoni1994} gives a torsion-free abelian group $A$ with $\Endo(A)=\Z[[X]]$ such that every element is annihilated by a power of $X$.  Since $\Z[[X]]$ is a commutative integral domain, Proposition~\ref{prop:lambek-comparison} shows that this group is endosingular.

\section{The fully invariant subgroup lattice inside Problem~1.2}

The next theorem realizes the fully invariant subgroup lattice of an arbitrary torsion-free group as an interval in a singular-submodule lattice.  It gives a concrete link between Problems~1.1 and~1.2 of \cite{Fuchs2015}.

Fix a prime $p$ and put
\[
D=\Z(p^\infty).
\]
Recall that $D$ is divisible, hence injective as an abelian group.

\begin{theorem}\label{thm:FI-embedding}
Let $B$ be any torsion-free abelian group and let
\[
A=D\oplus B.
\]
Then
\begin{equation}\label{eq:DB-sing}
Z_{\Endo(A)}(A)=D\oplus pB.
\end{equation}
Moreover, the interval of singular submodules containing $D$ is isomorphic, as a lattice, to the fully invariant subgroup lattice of $B$.  Explicitly,
\begin{equation}\label{eq:lattice-embed}
[D,\,Z_{\Endo(A)}(A)]\cong \FIo(B).
\end{equation}
\end{theorem}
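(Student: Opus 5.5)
The plan is to apply Theorem~\ref{thm:transfer} to $A=D\oplus B$. Here $T=t(A)=D$ because $B$ is torsion-free. Hence $\mathcal P_A=\{p\}$ and $P_A=pA=pD\oplus pB=D\oplus pB$, since $D$ is divisible. The quotient $A/T$ is identified with $B$ via the projection $\pi$. By \eqref{eq:transfer}, the formula \eqref{eq:DB-sing} then reduces to showing $Z_E(B)=B$ for the action of $E=\Endo(A)$ induced on $B$.

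\textbf{Step 1: $H_A\leess E$.} Since $H_A\subseteq\Anno_E(\pi x)$ for every $x\in A$, it suffices to show that $H_A=\{f\in E:f(A)\subseteq D\}$ is essential in $E$. Let $0\ne f\in E$ and pick $y=d+b\in A$ with $f(y)$ replaced by a nonzero value $y\ne0$ in $f(A)$. I construct $g\in E$ with $g(A)\subseteq D$ and $g(y)\ne0$.
\begin{itemize}
\item If $b\ne0$: the subgroup $\langle b\rangle\cong\Z$ is free, so there is a homomorphism $\langle b\rangle\to D$ sending $b$ to a nonzero element. Injectivity of $D$ extends it to $\varphi:B\to D$. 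Put $g=\iota_D\varphi\,\mathrm{pr}_B$, where $\iota_D:D\to A$ is the inclusion; then $g(y)=\varphi(b)\ne0$.
\item If $b=0$: take $g=\iota_D\,\mathrm{pr}_D$, so $g(y)=d\ne0$.
\end{itemize}
In either case $0\ne gf\in Ef\cap H_A$. Hence $H_A\leess E$, every $\Anno_E(\pi x)$ is essential, $Z_E(B)=B$, and \eqref{eq:transfer} gives $Z_E(A)=D\oplus pB$. Equivalently, Proposition~\ref{prop:relative-essentiality} applies here, since there are no nonzero liftable left ideals.

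\textbf{Step 2: describing the interval.} By Lemma~\ref{lem:submodule-sing}, the singular submodules containing $D$ are exactly the fully invariant $N$ with $D\le N\le D\oplus pB$. Any such $N$ equals $D\oplus N'$ with $N'=N\cap B\le pB$. Write $f\in E$ in matrix form. The component $B\to D$ is arbitrary, while the component $D\to B$ vanishes, because $D$ is torsion and $B$ is torsion-free. Hence
\[
f(d+n)=f_{DD}(d)+f_{DB}(n)+f_{BB}(n),
\]
and $D\oplus N'$ is fully invariant in $A$ if and only if $N'$ is invariant under all $f_{BB}$. Every $\beta\in\Endo(B)$ arises as some $f_{BB}$, namely from $0\oplus\beta$. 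Therefore the interval $[D,Z_E(A)]$ is order-isomorphic to $\{Y\in\FIo(B):Y\le pB\}$.

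\textbf{Step 3: rescaling to all of $\FIo(B)$.} Multiplication by $p$ is an injective endomorphism of $B$ that commutes with every endomorphism. I use the maps
\[
X\mapsto pX,\qquad Y\mapsto p^{-1}Y=\{b\in B:pb\in Y\}.
\]
Both send fully invariant subgroups to fully invariant subgroups, and both preserve inclusion.
\begin{itemize}
\item $p^{-1}(pX)=X$ holds by torsion-freeness of $B$.
\item $p(p^{-1}Y)=Y$ holds because $Y\le pB$.
\end{itemize}
So these are inverse order-isomorphisms between $\FIo(B)$ and $\{Y\in\FIo(B):Y\le pB\}$, hence lattice isomorphisms, which yields \eqref{eq:lattice-embed}.

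The main obstacle I expect is Step 1, the essentiality of $H_A$. The essential input there is the existence of nonzero maps into $D$ from any nonzero cyclic subgroup, which relies on the injectivity of $D$. The remaining steps are bookkeeping with the matrix form of endomorphisms.
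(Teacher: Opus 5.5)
Your proof is correct and follows the paper's route. You reduce via Theorem~\ref{thm:transfer} to showing $H_A\leess E$ using injectivity of $D$; your case split on the $B$-component of a nonzero value of $f$, rather than on whether $e_Df\ne0$, is immaterial. Your Steps~2 and~3 then spell out, through the matrix form and the explicit inverse $Y\mapsto p^{-1}Y$, what the paper compresses into the surjectivity of $E\to\Endo(B)$ and the $\Endo(B)$-isomorphism $B\to pB$ given by multiplication by $p$.
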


\begin{proof}
Let $E=\Endo(A)$ and $T=D$.  Since $T$ has only the $p$-primary component and $pD=D$, we have
\[
P_A=pA=D\oplus pB.
\]
It remains, by Theorem~\ref{thm:transfer}, to show that every element of $B=A/D$ is singular for the induced $E$-action.

Let $H=\ker(E\to\Endo(B))$, the left ideal of endomorphisms with image in $D$.  We claim that $H\leess E$.  Let $0\ne f\in E$, and regard the projection $e_D$ as an endomorphism of $A$ by composing it with the inclusion of $D$.  If $e_Df\ne0$, then $0\ne e_Df\in Ef\cap H$.  Otherwise $f(A)\subseteq B$, and we can choose $x\in A$ with $0\ne f(x)\in B$.  Torsion-freeness gives $\langle f(x)\rangle\cong\Z$.  Send $f(x)$ to a fixed nonzero element of $D$ and extend this homomorphism to $\varphi:B\to D$ by injectivity.  Define $g\in E$ by $g(d,b)=(\varphi(b),0)$.  Then $(gf)(x)\ne0$ and $gf\in H$.  In either case $Ef\cap H\ne0$, proving the claim.

For every $b\in B$, $H\subseteq\Anno_E(b)$, so $b\in Z_E(B)$.  Hence Theorem~\ref{thm:transfer} gives \eqref{eq:DB-sing}.

Now $E\to\Endo(B)$ is surjective because every endomorphism of $B$ lifts diagonally to $D\oplus B$.  Therefore the $E$-submodules $N$ with
\[
D\le N\le D\oplus pB
\]
correspond, after quotienting by $D$, exactly to the $\Endo(B)$-submodules of $pB$.  Since multiplication by $p$ gives an $\Endo(B)$-module isomorphism $B\to pB$ (using torsion-freeness of $B$), these submodules form a lattice isomorphic to $\FIo(B)$.  This proves \eqref{eq:lattice-embed}.
\end{proof}

\begin{remark}
Explicitly, the lattice isomorphism sends a fully invariant subgroup $K\le B$ to $D\oplus pK$.  Thus a description of all the intervals in Theorem~\ref{thm:FI-embedding} also describes $\FIo(B)$ for arbitrary torsion-free $B$.  This is the precise reduction established here.  It does not, by itself, prove that any specified collection of classical invariants is insufficient for such a description.
\end{remark}

\begin{example}\label{ex:no-swap}
For $A=\Z(p^\infty)\oplus\Z$, the proof of Theorem~\ref{thm:FI-embedding} gives
\[
Z_E(A)=\Z(p^\infty)\oplus p\Z,
\qquad Z_E(A/t(A))=\Z.
\]
In contrast, $Z_{\Endo(\Z)}(\Z)=0$ by Corollary~\ref{cor:rank-one}.  The induced $E$-action in Theorem~\ref{thm:transfer} therefore cannot be replaced by the natural action of $\Endo(A/t(A))$.
\end{example}

\section{Concluding remarks}

The representation in Corollary~\ref{cor:fuchs} characterizes all singular submodules in the unrestricted setting of Fuchs' question.  Theorems~\ref{thm:transfer} and \ref{thm:delta-kernel} identify the torsion contribution and the obstruction to lifting quotient elements simultaneously at all relevant primes.  Theorem~\ref{thm:ExtHom} then expresses the remaining submodule choices by the splitting of a specified extension.

These statements give a module-theoretic answer and an exact decomposition of its data.  For a more explicit classification in further classes of groups, the remaining tasks are to determine the relative singular submodule $Z_E(B)$ and to compute the extension data.  Remark~\ref{rem:torsion-free-boundary} records the limitation when $A$ is torsion-free.  The interval realization in Theorem~\ref{thm:FI-embedding} shows how fully invariant subgroup lattices enter such a classification.

\end{document}